\theoremstyle{plain}
\newtheorem{theorem}{Theorem}[section]
\newtheorem{lemma}[theorem]{Lemma}
\newtheorem{corollary}[theorem]{Corollary}
\theoremstyle{definition}
\newtheorem{definition}[theorem]{Definition}
\newtheorem{example}[theorem]{Example}
\def\ds{\displaystyle}
\newcommand{\RR}{{\mathbb R}}
\newcommand{\NN}{{\mathbb N}}
\newcommand{\SF}{{\mathcal S}}
\newcommand{\UU}{{\mathcal U}}
\begin{document}

\title{The Daniell Integral}
\author{\small{Elliot Blackstone and Piotr Mikusi\'nski}\\
\small{University of Central Florida, Orlando, Florida}}
\date{}

\maketitle

\section{Introduction}

In the following development of the Daniell Integral, we do not use the standard approach of introducing auxiliary spaces of the ``over-functions" and ``under-functions" (see, for example,  \cite{driver03}, \cite{Loomis}, or \cite{Royden}).   These spaces are only used as a step in the construction and are not needed afterwards.  Instead, we consider a construction of the Daniell integral that is modeled after the following definition of the Lebesgue integrable functions:

A real function $f$, defined on $\RR^N$, is Lebesgue integrable if there exists a sequence of simple functions $f_1,f_2,f_3, ...$ such that
\begin{enumerate}
\item[$\mathbb{A}$] $ \sum\limits_{n=1}^{\infty} \int |f_n| < \infty $
\item[$\mathbb{B}$] $ f(x) = \sum\limits_{n=1}^{\infty} f_n(x)$ for every $x \in \RR^N$ for which $\sum\limits_{n=1}^{\infty} |f_n(x)| < \infty $.
\end{enumerate}

This approach to the Lebesgue integral has been introduced in \cite{JM} and used \cite{Hilbert}, \cite{JMPM}, and \cite{PMMT}. It gives a very fast and natural way of developing the theory of the Lebesgue integral as well as the Bochner integral.  In this article we use this approach to the Daniell integral.  This method allows us to introduce the integral and the space of integrable functions in one step without any other constructions.  Moreover, our approach simplifies the proofs of important theorems on the integral.  The construction of a complete Daniell space, based on this method, has been presented in \cite{mik89}.

\section{Daniell Spaces}

\begin{definition}\label{def1.2.1}
A {\it Riesz space}, or {\it vector lattice}, is a vector space which is closed under the operations $\max(f,g)=f\lor g$ and $\min(f,g)=f\wedge g$.
\end{definition}

Notice that if $f$ is in some Riesz space, then $|f|$ is also in that Riesz space.

\begin{definition}\label{def1.2.2}
A triple $(X, \mathcal{U}, \int)$ is called a {\it Daniell space} if $X$ is a nonempty set, $\mathcal{U}$ is a Riesz space of real valued functions on $X$, and $\int: \mathcal{U} \to \RR$ is a linear functional such that
\begin{enumerate}
\item[I] $\int f \geq 0$ whenever $f\geq 0$,
\item[II] $\int f_n \to 0$ for every non-increasing sequence of functions $f_n \in \mathcal{U}$ such that $f_n(x) \to 0$ for every $x\in X$.
\end{enumerate}
\end{definition}

\begin{example}\label{l1}
 Let $X=\NN$ and let $\mathcal{U}$ be the space of all real valued functions $f$ on $\NN$ that are $0$ at all but finitely many $n$. Define
 $$
 \int f = \sum_{n=1}^\infty f(n).
 $$
It is easy to see that this is a Daniell space.
\end{example}

\begin{example}\label{Leb}
 By a {\it semi-open interval in} $\mathbb R^N$ we mean a set
$I$ which can be represented as 
\begin{equation*}%\label{eq2.14.1}
I=[ a_1,b_1) \times \dots  \times [a_N,b_N).
\end{equation*}
In other words, $x=(x_1,\dots ,x_N)\in I$ if $a_k\leq x_k<b_k$ for $k=1,2,\dots ,N$. The collection of all semi-open intervals in $\RR^N$ will be denoted by $\mathfrak{I}(\RR^N)$. 

For an arbitrary interval $I$ in $\RR^N$ (not necessarily semi-open) by $\mu(I)$ we mean the $N$-dimensional volume of $I$. If $N=1$, then $\mu(I)$ is just the length of $I$; if $N=2$ then $\mu(I)$ is the area of $I$, if $N=3$ then $\mu(I)$ is the volume of $I$, and so on.

By a {\it simple function} we mean a finite linear combination of characteristic functions of semi-open intervals:

\begin{equation}\label{eq2.14.2}
f=\lambda_1\chi_{I_1}+\dots  +\lambda_n\chi_{I_n},
\end{equation}
where $\lambda_i \in\RR$.
For the simple function $f$ in \eqref{eq2.14.2} define
\begin{equation*}%\label{def2.2.1}
 \int f=\lambda_1 \mu(I_1)+\dots  +\lambda_n \mu(I_n).
\end{equation*}
The space of all simple functions on $\RR^N$ will be denoted by $\SF(\RR^N)$. We will show that $(\RR^N, \SF(\RR^N), \int)$ is a Daniell space. It is clear that $\SF(\RR^N)$ is a Riesz space and that condition I is satisfied.  It remains to show that II holds. First we need the following lemma.

\begin{lemma}\label{lem2.2.2} Let $I_1, I_2,\dots \in \mathfrak{I}(\RR^N)$ 
 be a partition of an interval $I\in \mathfrak{I}(\RR^N)$, i.e., the intervals 
$I_1, I_2,\dots $ are disjoint and $\bigcup^\infty_{n=1}I_n = I$. Then
\begin{equation*}%\label{eq2.2.3}
\sum^\infty_{n=1}\mu(I_n) = \mu(I).
\end{equation*}
\end{lemma}

\begin{proof} Clearly, $\sum^\infty_{n=1}\mu(I_n) \leq \mu(I)$. Suppose $\sum^\infty_{n=1}\mu(I_n) < \mu(I)-\varepsilon$
for some $\varepsilon >0$. There exist numbers $\varepsilon_n >0$ such that
\begin{equation*}
\sum^\infty_{n=1} \mu(I_n+B_{\varepsilon_n}) < \mu(I)-\frac{\varepsilon}{2},
\end{equation*}
where $B_\varepsilon = \{x\in\RR^N: \|x\|<\varepsilon \}$. Let $J\subset I$ be a compact interval in $\RR^N$ such that
\[
\mu(I) < \mu(J)+\frac{\varepsilon}{2}.
\]
Since the sets $I_n+B_{\varepsilon_n}$ are open and $
J \subset \bigcup_{n=1}^\infty (I_n+B_{\varepsilon_n})$,
we have
$$
J \subset \bigcup_{n=1}^m (I_n+B_{\varepsilon_n})
$$
for some $m\in\NN$. But then
$$
\mu(I) < \mu(J)+\frac{\varepsilon}{2} \leq  \sum_{n=1}^m \mu(I_n+B_{\varepsilon_n})+\frac{\varepsilon}{2} < \sum_{n=1}^\infty \mu(I_n+B_{\varepsilon_n})+\frac{\varepsilon}{2} <  \mu(I).
$$
\end{proof}

Now we are ready to prove that condition II is satisfied.

\begin{theorem} \label{thm2.2.2}  Let $(f_n)$ be a non-increasing sequence of non-negative simple functions such that $\lim_{n \rightarrow \infty}f_n(x) = 0$ for every $x \in \mathbb R^N$. Then
$\lim_{n \rightarrow \infty}\int f_n = 0$. 
\end{theorem}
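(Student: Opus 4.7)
The plan is to estimate $\int f_n$ by splitting each $f_n$ at a threshold $\varepsilon$ into a part at most $\varepsilon$ on the support of $f_1$ and a part supported on the level set $\{f_n\ge\varepsilon\}$, then to show that the latter set has vanishing measure by a compactness argument modeled on Lemma~\ref{lem2.2.2}. Since $(f_n)$ is non-increasing and non-negative, $0\le f_n\le f_1$, so every $f_n$ vanishes outside a bounded semi-open interval $I$ containing the support of $f_1$; set $M=\max f_1$. Writing $f_n$ as a linear combination of characteristic functions of disjoint semi-open intervals $J_{n,k}$ with coefficients $c_{n,k}$, the level set $B_n=\{x:f_n(x)\ge\varepsilon\}$ is the union of those $J_{n,k}$ with $c_{n,k}\ge\varepsilon$, hence a finite disjoint union of semi-open intervals contained in $I$. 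The pointwise inequality $f_n\le\varepsilon\chi_I+M\chi_{B_n}$ then gives $\int f_n\le\varepsilon\mu(I)+M\mu(B_n)$.

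Everything hinges on showing $\mu(B_n)\to 0$. The $B_n$ are decreasing (because $f_n$ is) and $\bigcap_n B_n=\emptyset$ (because $f_n(x)\to 0$). Fix $\eta>0$. I will approximate $B_n$ from inside: in each $J_{n,k}$ pick a strictly smaller semi-open sub-interval $J'_{n,k}$ whose closure still lies in $J_{n,k}$, arranged so that $\mu(B_n)-\sum_k\mu(J'_{n,k})<\eta/2^n$. Let $\tilde B_n=\bigcup_k\overline{J'_{n,k}}$, a compact subset of $B_n$. Then $\bigcap_n\tilde B_n\subset\bigcap_n B_n=\emptyset$, and the finite intersection property for compact sets produces an $N$ with $\bigcap_{n=1}^N\tilde B_n=\emptyset$.

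For any $x\in B_N$, the inclusions $B_N\subset B_n$ force $x\notin\tilde B_n$ for some $n\le N$, so $B_N\subset\bigcup_{n=1}^N(B_n\setminus\tilde B_n)\subset\bigcup_{n=1}^N(B_n\setminus\bigcup_k J'_{n,k})$. Each set on the right is a finite union of semi-open intervals with $\mu$-measure less than $\eta/2^n$, so monotonicity and linearity of $\int$ on $\SF(\RR^N)$ give $\mu(B_N)<\eta$, hence $\mu(B_n)<\eta$ for all $n\ge N$. Thus $\limsup_n\int f_n\le\varepsilon\mu(I)+M\eta$, and since $\varepsilon,\eta$ are arbitrary, $\int f_n\to 0$. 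The main obstacle is that the $B_n$ are neither open nor closed, so Heine--Borel cannot be applied to them directly; the inner compact approximation with geometrically summable error is the device that bridges this gap and reduces the problem to exactly the compactness input used in Lemma~\ref{lem2.2.2}.
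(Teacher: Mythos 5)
Your proof is correct, but it follows a genuinely different route from the paper's. The paper argues by contradiction: assuming $\lim\int f_n=\varepsilon>0$, it works with the sets where $f_n$ is \emph{small}, $A_n=\{x\in I: f_n(x)<\alpha\}$ with $\alpha=\varepsilon/(2\mu(I))$, turns them into a countable disjoint partition of $I$ by semi-open intervals via $B_n=A_n\setminus A_{n-1}$, invokes Lemma \ref{lem2.2.2} (countable additivity on interval partitions, itself proved by outer enlargement of the pieces and Heine--Borel), and then splits $f_{n_0}=g+h$ to force $\int f_{n_0}<\varepsilon$. You instead give a direct argument with the sets where $f_n$ is \emph{large}, $B_n=\{f_n\ge\varepsilon\}$, which decrease to the empty set, and you prove $\mu(B_n)\to 0$ by compact inner approximation with geometrically summable error plus the finite intersection property, concluding from $f_n\le\varepsilon\chi_I+M\chi_{B_n}$. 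In effect you re-prove, inside the theorem, the continuity from above at $\emptyset$ of the elementary premeasure on finite unions of semi-open intervals --- the dual compactness device (inner closed approximation) to the paper's outer open enlargement in Lemma \ref{lem2.2.2}, which your proof bypasses entirely. Both arguments lean on the same tacit finite-additivity facts about simple functions (well-definedness, positivity, monotonicity of $\int$ on $\SF(\RR^N)$), so the level of rigor is comparable. What the paper's route buys is economy of structure: the compactness step is quarantined in a lemma that is natural to state anyway, and the theorem then needs only one application of it. What your route buys is directness (no contradiction, no division by $\mu(I)$ or $\max|f_1|$, so degenerate cases are automatic) and a cleaner isolation of the measure-theoretic heart of the matter, at the cost of carrying the approximation/FIP machinery inside the proof; minor points worth writing out are the passage to a representation of $f_n$ over disjoint intervals (which the paper also takes for granted) and the observation that the $\tilde B_n$ need not be nested, which the finite intersection property does not require.
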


\begin{proof}  Since the sequence $( \int f_n )$ is non-increasing and bounded from below (by $0$), it converges. Let
\begin{equation}\label{eq2.2.5}
\lim_{n \rightarrow \infty}\int f_n  = \varepsilon.
\end{equation}
Suppose $\varepsilon > 0$.  Let $I\in\mathfrak{I}(\RR^N)$ be an interval containing the
support of $f_1$ (and thus the support of every $f_n$, $n = 1, 2, \dots $  ). Let $\alpha = \frac{\varepsilon}{2\mu(I)}$. For $n = 1, 2, \dots $ define
$$
A_n = \{ x \in I :f_n(x) < \alpha \}
\;\; \text{and} \;\;
B_1 = A_1,\ B_n = A_n \setminus A_{n-1} \;\;\text{for} \;\; n \geq 2.
$$ 
Note that $B_n$'s are disjoint (because $A_{n-1}\subseteq A_n$) and 
$\cup_{n=1}^{\infty} B_n = I$ (because $\cup_{n=1}^{\infty} A_n = I$ ). Since $f_n$'s are simple functions, $B_n$'s are finite unions of disjoint semi-open intervals, say
$$
B_n  =  I_{n,1} \cup \dots \cup I_{n,k_n}.
$$  
The intervals
$$
I_{1,1},\dots , 
I_{1,k_1},\dots , 
I_{n,1},\dots , 
I_{n,k_n}, \dots 
$$
satisfy the assumptions of Lemma \ref{lem2.2.2} and thus
$$
\sum^\infty_{n=1}
\sum^{k_n}_{k=1} \mu(I_{n,k})  = \mu(I). 
$$
Let $n_0 \in \mathbb N$ be such that
\begin{equation}\label{eq2.2.6}
\sum^\infty_{n=n_0+1}
\sum^{k_n}_{k=1}\mu(I_{n,k})  < \delta,
\end{equation}
where $\ds \delta = \frac{\varepsilon}{2 \max |f_1|}$ . Set
$B =B_1 \cup  \dots  \cup B_{n_0}$ and define 
 two auxiliary functions $g$ and $h$:
$$
g(x)= \begin{cases} 
f_{n_0}(x) & \text{ for } x \in B,\\
0 & \text{ otherwise},
\end{cases} 
$$
and 
$$
h(x)=\begin{cases} 
0 & \text{ for } x \in B,\\
f_{n_0}(x) & \text{ otherwise}. 
\end{cases} 
$$ 
Since $B \subset A_{n_0}$, we have $f_{n_0}(x) < \alpha$ for $x\in B$. 
Consequently, $g(x) < \alpha$ for all $x \in \mathbb R^N$, which gives us
\begin{equation}\label{eq2.2.7}
\int g < \alpha \mu (I) = \frac{\varepsilon}{2}.
\end{equation}
Moreover, 
\begin{equation}\label{eq2.2.8}
\int h < \delta \max |f_{n_0}| \leq \delta \max |f_1| 
= \frac{\varepsilon}{2} ,
\end{equation}
because of \eqref{eq2.2.6}. As $f_{n_0} = g + h$, we have
$$
\int f_{n_0} = \int g + \int h < \varepsilon,
$$
by \eqref{eq2.2.7} and \eqref{eq2.2.8}. Since the sequence $\{\int f_n\}$ is non-increasing, we conclude
$$
\lim_{n \rightarrow \infty}\int f_n  \leq 
\int f_{n_0}  <  \varepsilon,
$$
which contradicts \eqref{eq2.2.5}. Therefore $\varepsilon = 0$, which completes the proof. 
\end{proof}

\end{example}

In Section \ref{DS&M} we generalize Example \ref{Leb} to abstract measure spaces.

\begin{definition}\label{def1.2.3}
Let $f$ be a real function on $X$.  If there exist functions $f_n \in \mathcal{U}, n \in \mathbb{N}$, such that
\begin{enumerate}
\item[$\mathbb{A}$] $\sum\limits_{n=1}^{\infty} \int |f_n| < \infty,$
\item[$\mathbb{B}$] $f(x) = \sum\limits_{n=1}^{\infty} f_n(x)$ for every $x \in X$ for which $\sum\limits_{n=1}^{\infty} |f_n(x)| < \infty,$
\end{enumerate}
then we write $f \simeq \sum\limits_{n=1}^{\infty}f_n$ or $f \simeq f_1 + f_2 + f_3 + \cdots$.
\end{definition}

\begin{definition}\label{def1.2.4}
A Daniell space (X, $\mathcal{U}$, $\int$) will be called $\it{complete}$ if  $f \simeq \sum\limits_{n=1}^{\infty}f_n$, for some $f_1,f_2, \dots \in \mathcal{U}$, implies that $f \in \mathcal{U}$.
\end{definition}

The space in Example \ref{Leb} is an example of a Daniell space that is not complete. Consider, for example, the function 
$$
f=\sum_{n=1}^\infty \frac1{2^n} \chi_{[n-1,n)} .
$$

In the next section we show that every Daniell space can be extended to a complete Daniell space.  It will later become clear that our unusual definition of completeness is equivalent to completeness in normed spaces. The problem is that we cannot simply say that completeness means that $\sum\limits_{n=1}^{\infty} \int |f_n| < \infty$ implies that $\sum\limits_{n=1}^{\infty} f_n \in \mathcal{U}$, because the series need not converge at every point, so the function $\sum\limits_{n=1}^{\infty} f_n$ is not well defined.

\section{Extension of Daniell spaces}

\begin{definition}\label{def1.2.5}
Given a Daniell space $(X, \mathcal{U}, \int)$, let $\mathcal{U}^*$ be the space of all real valued functions $f$ on $X$ for which there exists a sequence of functions $f_1,f_2, \ldots \in \mathcal{U}$ such that $f \simeq \sum\limits_{n=1}^{\infty}f_n$.  
\end{definition}

It is our goal to show that $(X, \mathcal{U}^*, \int)$ is a complete Daniell space, where 
the integral of $f \simeq \sum\limits_{n=1}^{\infty}f_n$ is defined as $\int f = \sum\limits_{n=1}^{\infty} \int f_n$. First we need to show that the integral is  independent of a particular representation of $f$.  This will require a couple of technical lemmas. 

\begin{lemma}\label{lem1.2.5}
If the sequences $(g_n)$ and $(h_n)$, $g_n,h_n \in \mathcal{U}$, are non-decreasing and $\lim\limits_{n \to \infty} h_n(x) \leq \lim\limits_{n \to \infty} g_n(x)$ for every $x \in X$, then $\lim\limits_{n \to \infty}\int h_n(x) \leq \lim\limits_{n \to \infty} \int g_n(x)$.
\end{lemma}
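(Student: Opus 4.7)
The plan is to fix $m$, let $n$ vary, and exploit Condition II of the Daniell space applied to the auxiliary sequence $\varphi_n = (h_m - g_n)^+ = (h_m - g_n) \vee 0$. Each $\varphi_n$ lies in $\mathcal{U}$ because $\mathcal{U}$ is a Riesz space, so it is closed under vector operations and under $\vee$. Note also that monotonicity of $\int$ on $\mathcal{U}$ (i.e.\ $a \leq b$ in $\mathcal{U}$ implies $\int a \leq \int b$) follows at once from linearity plus Condition I, so the sequences $\int g_n$ and $\int h_m$ are non-decreasing and their limits in $[-\infty,\infty]$ exist.

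For fixed $m$, I would check two things about $(\varphi_n)_{n=1}^\infty$. First, it is non-increasing in $n$: since $g_n \leq g_{n+1}$, we have $h_m - g_n \geq h_m - g_{n+1}$, and applying $\vee 0$ preserves this inequality. Second, it converges pointwise to $0$: for any $x \in X$, the hypothesis gives
\begin{equation*}
h_m(x) \;\leq\; \lim_{n\to\infty} h_n(x) \;\leq\; \lim_{n\to\infty} g_n(x),
\end{equation*}
so for every $\varepsilon>0$ we eventually have $g_n(x) > h_m(x) - \varepsilon$, forcing $\varphi_n(x)\to 0$. Condition II therefore yields $\int \varphi_n \to 0$.

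Since $h_m - g_n \leq \varphi_n$ pointwise, monotonicity of the integral gives
\begin{equation*}
\int h_m - \int g_n \;=\; \int (h_m - g_n) \;\leq\; \int \varphi_n.
\end{equation*}
Letting $n \to \infty$ yields $\int h_m \leq \lim_{n\to\infty} \int g_n$, and then letting $m \to \infty$ gives the desired inequality $\lim_{m\to\infty} \int h_m \leq \lim_{n\to\infty} \int g_n$.

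I expect the only subtle step to be the pointwise convergence $\varphi_n(x) \to 0$, because the hypothesis only gives a limit inequality rather than $h_m(x) \leq g_n(x)$ for large $n$; in the borderline case $h_m(x) = \lim_n g_n(x)$ one still has $\varphi_n(x) > 0$ for all $n$, but $\varphi_n(x)$ nevertheless tends to $0$. Everything else is bookkeeping with Condition I, linearity, and the Riesz space structure.
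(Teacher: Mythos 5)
Your proof is correct and is essentially the paper's own argument: your $\varphi_n = (h_m - g_n)\vee 0$ is exactly the auxiliary sequence $h_k - (h_k \wedge g_n)$ used in the paper, and both proofs apply Condition II to it, deduce $\int h_m \leq \lim_{n\to\infty}\int g_n$, and finish by letting $m \to \infty$. Your careful check of the pointwise convergence $\varphi_n(x)\to 0$ (including the borderline case) is a detail the paper only asserts, so no changes are needed.
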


\begin{proof} Fix $k \in \mathbb{N}$.  Since the functions $h_k-(h_k\wedge g_n)$, $n\in \mathbb{N}$, form a non-increasing sequence which converges to zero at ever point of $X$, we have 
\begin{equation}
\lim_{n\to \infty}\int h_k - \int (h_k\wedge g_n) = 0 \nonumber
\end{equation}
and hence
\begin{equation}
\int h_k = \lim_{n\to \infty}\int (h_k\wedge g_n) \leq \lim_{n\to \infty}\int g_n. \nonumber
\end{equation}
Since this is true for all $k$, we let $k\to \infty$, and obtain the desired inequality.
\end{proof}

\begin{lemma}\label{thm1.2.6}
If $f \simeq \sum\limits_{n=1}^{\infty} f_n$ and $f \geq 0$, then $\sum\limits_{n=1}^{\infty} \int f_n \geq 0$.
\end{lemma}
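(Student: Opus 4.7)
The plan is to reduce to Lemma~\ref{lem1.2.5} by splitting each $f_n$ into positive and negative parts and working with partial sums. Since $\mathcal{U}$ is a Riesz space, $f_n^+ = f_n \vee 0$ and $f_n^- = (-f_n)\vee 0$ both lie in $\mathcal{U}$. Define
$$
G_n = \sum_{k=1}^n f_k^+, \qquad H_n = \sum_{k=1}^n f_k^-.
$$
These are non-decreasing sequences in $\mathcal{U}$, and since $\sum \int |f_k| = \sum \int f_k^+ + \sum \int f_k^-$ is finite by $\mathbb{A}$, both $\lim \int G_n$ and $\lim \int H_n$ are finite. Moreover $\sum_{k=1}^\infty \int f_k = \lim \int G_n - \lim \int H_n$, so the desired inequality is equivalent to $\lim \int H_n \leq \lim \int G_n$.

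To apply Lemma~\ref{lem1.2.5}, I need a pointwise inequality between non-decreasing sequences. The natural comparison $\lim H_n \leq \lim G_n$ is easy on the set $A = \{x : \sum |f_k(x)| < \infty\}$, because there $\lim G_n(x)$ and $\lim H_n(x)$ are both finite and $\mathbb{B}$ gives $\lim G_n(x) - \lim H_n(x) = f(x) \geq 0$. The main obstacle is the complementary set $X \setminus A$, where $f$ is unconstrained and it is possible for $H_n(x) \to \infty$ while $G_n(x)$ stays bounded. To handle this, I fix $\varepsilon > 0$ and compare $H_n$ to the perturbed sequence $G_n + \varepsilon H_n$, which is still non-decreasing and lies in $\mathcal{U}$. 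A case split verifies that $\lim H_n(x) \leq \lim(G_n + \varepsilon H_n)(x)$ holds everywhere on $X$: on $A$ we use $f\geq 0$ as above; if $\lim H_n(x) = \infty$ then $\lim(G_n+\varepsilon H_n)(x) = \infty$; and if $\lim H_n(x) < \infty$ but $x \notin A$, then necessarily $\lim G_n(x) = \infty$, so again the right side is infinite.

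Lemma~\ref{lem1.2.5} then yields
$$
\lim_{n\to\infty}\int H_n \;\leq\; \lim_{n\to\infty}\int(G_n + \varepsilon H_n) \;=\; \lim_{n\to\infty}\int G_n \;+\; \varepsilon \lim_{n\to\infty}\int H_n.
$$
Because $\lim \int H_n$ is finite, I can rearrange to $(1-\varepsilon)\lim \int H_n \leq \lim \int G_n$, and letting $\varepsilon \to 0^+$ gives $\lim \int H_n \leq \lim \int G_n$. This is exactly $\sum \int f_n \geq 0$.

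The only subtle point is the perturbation trick in the third paragraph: without the $\varepsilon H_n$ term, the comparison fails on $X\setminus A$, and any attempt to simply throw away that set runs into the fact that the Daniell framework has no null-set notion available yet. Adding $\varepsilon H_n$ forces the right-hand sequence to blow up wherever $H_n$ does, which is precisely what is needed, and the finiteness of $\lim \int H_n$ allows the $\varepsilon$ to be absorbed at the end.
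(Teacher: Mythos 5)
Your proof is correct: the positive/negative-part decomposition, the three-case pointwise comparison, and the $\varepsilon$-perturbation all check out, and Lemma~\ref{lem1.2.5} applies exactly as you use it (its hypothesis places no finiteness restriction on the pointwise limits, and the finiteness of $\lim\int H_n$, guaranteed by condition $\mathbb{A}$, is what legitimizes absorbing the $\varepsilon$ at the end). Your route is genuinely different from the paper's, though both hinge on Lemma~\ref{lem1.2.5}. The paper fixes $p$, forms $g_n=f_1+\cdots+f_p+|f_{p+1}|+\cdots+|f_{p+n}|$ and compares it with $h_n=g_n\wedge 0$: replacing the tail by absolute values forces $\lim g_n\geq 0$ everywhere (the limit is $+\infty$ precisely where absolute convergence fails, so no perturbation is needed), while $h_n$ increases to $0$; Lemma~\ref{lem1.2.5} then yields $\int f_1+\cdots+\int f_p+\sum_{n>p}\int|f_n|\geq 0$, and condition $\mathbb{A}$ kills the tail as $p\to\infty$. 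You instead split each $f_n$ into $f_n^{+}$ and $f_n^{-}$ and handle the bad set $X\setminus A$ by adding $\varepsilon H_n$ to the majorizing sequence, trading the paper's limit in $p$ for a limit in $\varepsilon$. Both devices address the same obstruction — the naive comparison can fail where $\sum_n|f_n(x)|=\infty$ — in different ways: the paper builds the blow-up into the auxiliary sequence itself, at the cost of carrying the parameter $p$ and a tail estimate, whereas you keep the natural partial sums and pay with the $\varepsilon$-absorption step, which needs $\lim\int H_n<\infty$. The two arguments are of comparable length; yours makes it slightly more explicit where positivity of $f$ and condition $\mathbb{A}$ each enter.
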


\begin{proof}First, note that condition $\mathbb{A}$ of Definition \ref{def1.2.3} ensures the convergence of $\sum\limits_{n=1}^{\infty} \int f_n$.  To show this sum is greater than or equal to 0, we begin by fixing some $p \in \mathbb{N}$.  Then, for $n \in \mathbb{N}$, define
\begin{equation}
g_n=f_1 + f_2+ \cdots + f_p + |f_{p+1}| + \cdots + |f_{p+n}| \text{ and } h_n=g_n\wedge 0. \nonumber
\end{equation}

The sequences $(g_n)$ and $(h_n)$ are non-decreasing, $g_n,h_n \in \mathcal{U}$ and $\lim\limits_{n \to \infty} g_n =  \lim\limits_{n \to \infty} h_n$ (possibly $\infty$).  The equality of the limits follows from $f \geq 0$ and condition $\mathbb{B}$ from Definition \ref{def1.2.3}.  Thus, by Lemma \ref{lem1.2.5}, we have $ \lim\limits_{n \to \infty} \int g_n =  \lim\limits_{n \to \infty} \int h_n \geq 0$.  So,
\begin{equation}
\int f_1 + \int f_2+ \cdots + \int f_p + \int |f_{p+1}| + \int |f_{p+2}| + \cdots \geq 0 \nonumber
\end{equation}
So by letting $p \to \infty$, we obtain $\sum\limits_{n=1}^{\infty} \int f_n \geq 0$.
\end{proof}

Now we obtain the desired result as a corollary.

\begin{corollary}\label{cor1.2.7}
If $f \simeq \sum\limits_{n=1}^{\infty} f_n$ and $f \simeq \sum\limits_{n=1}^{\infty} g_n$, then $\sum\limits_{n=1}^{\infty} \int f_n = \sum\limits_{n=1}^{\infty} \int g_n$.
\end{corollary}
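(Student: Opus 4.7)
The strategy is to reduce equality of the two sums to Lemma \ref{thm1.2.6} by interleaving the two representations into a single series representing the zero function.

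Concretely, define $h_{2n-1} = f_n$ and $h_{2n} = -g_n$ for $n \in \NN$. The plan is to show $0 \simeq \sum_{n=1}^\infty h_n$ and then apply Lemma \ref{thm1.2.6} to $0 \geq 0$ and to $-0 \geq 0$.

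For condition $\mathbb{A}$, absolute summability is immediate:
$$\sum_{n=1}^\infty \int |h_n| = \sum_{n=1}^\infty \int |f_n| + \sum_{n=1}^\infty \int |g_n| < \infty$$
by $\mathbb{A}$ applied to each of the two given representations of $f$. For condition $\mathbb{B}$, I would argue as follows. Fix $x \in X$ with $\sum_{n=1}^\infty |h_n(x)| < \infty$. Then both $\sum |f_n(x)|$ and $\sum |g_n(x)|$ are finite, so $\mathbb{B}$ applied to each representation gives $\sum f_n(x) = f(x) = \sum g_n(x)$. Because the interleaved series converges absolutely, we may rearrange freely and conclude $\sum h_n(x) = \sum f_n(x) - \sum g_n(x) = 0$. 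This establishes $0 \simeq \sum h_n$.

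Applying Lemma \ref{thm1.2.6} to the pair $0 \simeq \sum h_n$ (with $f=0 \geq 0$) yields $\sum \int h_n \geq 0$, i.e.\ $\sum \int f_n \geq \sum \int g_n$. The same argument applied to the negated interleaving (or, equivalently, swapping the roles of $f_n$ and $g_n$) gives the reverse inequality, and equality follows. No step looks like a genuine obstacle; the only point requiring a small amount of care is verifying $\mathbb{B}$ for the interleaved sequence, which rests on the fact that absolute convergence of the rearranged series forces absolute convergence of each of the two original series at $x$, making the rearrangement legal.
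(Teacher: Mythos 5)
Your proof is correct and follows essentially the same route as the paper: both arguments interleave the two representations to obtain $0 \simeq f_1 - g_1 + f_2 - g_2 + \cdots$ and then apply Lemma \ref{thm1.2.6} twice to get the two inequalities. The only difference is that you spell out the verification of conditions $\mathbb{A}$ and $\mathbb{B}$ for the interleaved series, which the paper leaves implicit.
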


\begin{proof} Since $0 \simeq f_1-g_1+f_2-g_2+\cdots$, we have
\begin{equation}
\sum_{n=1}^{\infty}\int f_n-\sum_{n=1}^{\infty}\int g_n \geq 0. \nonumber
\end{equation}
Similarly, we have 
\begin{equation}
\sum_{n=1}^{\infty}\int g_n-\sum_{n=1}^{\infty}\int f_n \geq 0, \nonumber
\end{equation}
which proves $\sum\limits_{n=1}^{\infty}\int g_n=\sum\limits_{n=1}^{\infty}\int f_n$.
\end{proof}

Formally, we should distinguish between the integral of a function in $\mathcal{U}$ and in $\mathcal{U}^*$.  It turns out to be unnecessary since, if $f\in \mathcal{U}$, then $f\in \mathcal{U}^*$ and both integrals are the same. Indeed, it suffices to observe that for $f \in \mathcal{U}$ we have $f \simeq f +0+0+\cdots$.

\begin{corollary}\label{cor1.2.9}
$\mathcal{U}^*$ is a vector space and $\int $ is a linear functional on $\mathcal{U}^*$.  Moreover, if $f,g \in \mathcal{U}^*$ and $f\leq g$, then $\int f\leq \int g$.
\end{corollary}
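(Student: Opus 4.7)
The plan is to reduce everything to the representation machinery already built: given $f,g\in\mathcal{U}^*$ with $f\simeq\sum f_n$ and $g\simeq\sum g_n$, I would produce explicit $\simeq$-representations for $f+g$ and $\alpha f$, read off linearity of $\int$ from those representations via Corollary \ref{cor1.2.7}, and then deduce monotonicity from Lemma \ref{thm1.2.6} applied to $g-f$.

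For closure under addition, I would consider the interleaved series $f_1+g_1+f_2+g_2+\cdots$ and claim that $f+g\simeq f_1+g_1+f_2+g_2+\cdots$. Condition $\mathbb{A}$ is immediate: the sum of the absolute integrals equals $\sum\int|f_n|+\sum\int|g_n|<\infty$. For $\mathbb{B}$, if the pointwise sum $\sum(|f_n(x)|+|g_n(x)|)$ is finite at $x$, then \emph{both} $\sum|f_n(x)|$ and $\sum|g_n(x)|$ are finite, so by $\mathbb{B}$ for the original representations we have $f(x)=\sum f_n(x)$ and $g(x)=\sum g_n(x)$, and therefore $(f+g)(x)$ equals the interleaved sum. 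Consequently $f+g\in\mathcal{U}^*$, and the definition of $\int$ on $\mathcal{U}^*$ together with well-definedness (Corollary \ref{cor1.2.7}) yields $\int(f+g)=\int f+\int g$.

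For scalar multiples, I would take the representation $\alpha f\simeq \sum \alpha f_n$. Condition $\mathbb{A}$ just multiplies the absolute-integral sum by $|\alpha|$. For $\mathbb{B}$, if $\alpha\neq 0$ then $\sum|\alpha f_n(x)|<\infty$ iff $\sum|f_n(x)|<\infty$, and the identity $\alpha f(x)=\sum\alpha f_n(x)$ is immediate; the case $\alpha=0$ is trivial (use $0\simeq 0+0+\cdots$). This gives $\alpha f\in\mathcal{U}^*$ with $\int(\alpha f)=\alpha\int f$. Together, the two closure statements show $\mathcal{U}^*$ is a vector space and $\int$ is linear on it.

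Finally, for monotonicity, suppose $f\leq g$ with $f,g\in\mathcal{U}^*$. By what we have just shown, $h:=g-f\in\mathcal{U}^*$ and $h\geq 0$; concretely, $h\simeq g_1-f_1+g_2-f_2+\cdots$ (or any interleaving), and the sum of $\int$'s along this representation equals $\int g-\int f$. Now Lemma \ref{thm1.2.6} applies to the non-negative function $h$ and gives $\int h\geq 0$, hence $\int f\leq\int g$. The only real subtlety — and what I expect to be the main point to check carefully — is the verification of condition $\mathbb{B}$ for the interleaved series: one must observe that the hypothesis ``$\sum$ of absolute values at $x$ is finite'' for the interleaved sequence is equivalent to the two separate absolute-value sums at $x$ being finite, so that the $\mathbb{B}$ hypotheses for $f$ and $g$ individually are triggered simultaneously. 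Everything else is bookkeeping on top of Lemma \ref{thm1.2.6} and Corollary \ref{cor1.2.7}.
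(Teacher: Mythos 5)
Your proposal is correct and follows essentially the same route as the paper: the interleaved representation $f+g\simeq f_1+g_1+f_2+g_2+\cdots$, the scaled representation $\lambda f\simeq \lambda f_1+\lambda f_2+\cdots$, and Lemma \ref{thm1.2.6} applied to $g-f\geq 0$ for monotonicity. Your explicit check of condition $\mathbb{B}$ for the interleaved series is exactly the detail the paper leaves implicit, so nothing is missing.
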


\begin{proof}
   If  $f_n \simeq \sum\limits_{n=1}^{\infty} f_n$, $g \simeq \sum\limits_{n=1}^{\infty} g_n$ and $\lambda \in \RR$, then 
\begin{equation}
f+g=f_1+g_1+f_2+g_2+\cdots \quad \text{and} \quad \lambda f \simeq \lambda f_1+\lambda f_2 + \cdots. \nonumber
\end{equation}
Consequently, 
\begin{equation}
\int (f+g) = \int f+\int g \text{ and } \int \lambda f = \lambda \int f. \nonumber
\end{equation}

If $f,g \in \mathcal{U}^*$ and $f\leq g$, then $g-f\in \mathcal{U}^*$ and $g-f \geq 0$.  Hence $\int (g-f) \geq 0$ by Lemma \ref{thm1.2.6}, therefore, $\int f \leq \int g$.
\end{proof}

\begin{theorem}\label{thm1.2.10}
If $f\in \mathcal{U}^*$, then $|f|\in \mathcal{U}^*$ and $|\int f|\leq \int |f|$.  Moreover, if $f\simeq \sum\limits_{n=1}^{\infty}f_n$, then $\int |f|=\lim\limits_{n\to \infty}\int |f_1 + \cdots +f_n|$.
\end{theorem}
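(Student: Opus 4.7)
The strategy is to produce an explicit $\simeq$-representation of $|f|$, from which membership in $\mathcal{U}^*$ and the formula for $\int|f|$ both fall out; the inequality $\left|\int f\right|\le \int|f|$ will then be a formal consequence of monotonicity. Writing $s_N=f_1+\cdots+f_N$ and $s_0=0$, each $|s_N|$ lies in $\mathcal{U}$, so the telescoping candidate $h_n=|s_n|-|s_{n-1}|\in \mathcal{U}$ is natural, with partial sums equal to $|s_N|$. The reverse triangle inequality $|h_n|\le|f_n|$, together with the monotonicity of $\int$ on $\mathcal{U}$, gives $\sum_n\int|h_n|\le\sum_n\int|f_n|<\infty$, so condition $\mathbb{A}$ of Definition \ref{def1.2.3} is immediate.

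The delicate step is condition $\mathbb{B}$. At a point $x$ with $\sum_n|h_n(x)|<\infty$ we know $|s_N(x)|$ converges, but absence of absolute convergence of $\sum_n|f_n(x)|$ prevents us from invoking condition $\mathbb{B}$ for $f$ to deduce $s_N(x)\to f(x)$, and so we cannot match the limit with $|f(x)|$ (which is not otherwise pinned down at such a point). I would repair this by interleaving padding terms: set $g_{3n-2}=h_n$, $g_{3n-1}=|f_n|$, $g_{3n}=-|f_n|$. Then $\sum_k\int|g_k|\le 3\sum_n\int|f_n|<\infty$, and crucially the $|f_n|$ terms force $\sum_n|f_n(x)|<\infty$ at every $x$ with $\sum_k|g_k(x)|<\infty$. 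So $s_N(x)\to f(x)$ by condition $\mathbb{B}$ for $f$, hence $|s_N(x)|\to|f(x)|$; since $|f_N(x)|\to 0$, the three subsequences of partial sums of $\sum g_k$ at $x$ (namely $|s_N(x)|$, $|s_N(x)|+|f_N(x)|$, and $|s_N(x)|$) all converge to $|f(x)|$. This gives $|f|\simeq \sum_k g_k$, so $|f|\in\mathcal{U}^*$.

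From the definition of $\int$ on $\mathcal{U}^*$, $\int|f|=\sum_k\int g_k$; since the series converges absolutely I may group the $\pm|f_n|$ pairs, leaving $\int|f|=\sum_n\int h_n=\lim_N\int|s_N|$, which is the ``Moreover'' identity. Finally, the monotonicity part of Corollary \ref{cor1.2.9} applied to $\pm f\le|f|$ gives $\pm\int f\le\int|f|$, i.e.\ $\left|\int f\right|\le\int|f|$. I expect the only genuine obstacle to be the choice of the interleaved $g_k$: the naive telescoping fails at points where $\sum|f_n(x)|$ diverges but $\sum|h_n(x)|$ converges, and it takes a small observation to see that padding with $\pm|f_n|$ pairs matches the two absolute-convergence sets without altering the telescoped sum.
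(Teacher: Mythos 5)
Your proof is correct and follows essentially the same route as the paper: telescope $|s_n|-|s_{n-1}|$, pad the series with cancelling pairs so that absolute convergence of the padded series forces $\sum_n|f_n(x)|<\infty$ (the paper pads with $f_n,-f_n$ where you use $|f_n|,-|f_n|$, an immaterial difference), then read off $\int|f|=\lim_N\int|s_N|$ and get $\left|\int f\right|\le\int|f|$ from monotonicity. No gaps.
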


\begin{proof} Let $f\simeq \sum\limits_{n=1}^{\infty}f_n$.  Define
\begin{equation}
A = \{ x\in X : \sum_{n=1}^{\infty}|f_n| < \infty \} \text{ and } s_n = f_1 +\cdots + f_n. \nonumber
\end{equation}
Then, $f(x)=\lim\limits_{n\to \infty} s_n(x)$ for all $x\in A$.  In other words,
\begin{equation}\label{eq3.6.1}
|f|= |s_1(x)|+(|s_2(x)|-|s_1(x)|)+(|s_3(x)|-|s_2(x)|)+\cdots \text{ for } x\in A.
\end{equation}
Let $g_1=|s_1|$ and $g_n=|s_n|-|s_{n-1}|$ for $n\geq 2$. 
We claim that 
$$
|f|\simeq g_1+f_1-f_1+g_2+f_2-f_2+\cdots.
$$  
We will show that $\sum\limits_{n=1}^{\infty} \int |g_n| < \infty$ and that $|f(x)|=\sum\limits_{n=1}^{\infty} g_n$ for all $x\in A$.

First, for $n\geq 2$, we have
\begin{equation}
|g_n| = ||s_n|-|s_{n-1}|| \leq |s_n-s_{n-1}| = |f_n| \nonumber
\end{equation}
Thus, $\sum\limits_{n=1}^{\infty} \int |g_n| \leq \sum\limits_{n=1}^{\infty} \int |f_n| < \infty$, by Corollary \ref{cor1.2.9}, and since $f\simeq \sum\limits_{n=1}^{\infty}f_n$.  Next, by \eqref{eq3.6.1} we have that $|f(x)|=\sum\limits_{n=1}^{\infty} g_n$ for all $x\in A$.  If $x\notin A$, then the sum is not absolutely convergent.  Therefore, $|f|\in \mathcal{U}^*$.

Since $f\leq |f|$ and $-f\leq |f|$, we have $\int  f \leq \int  |f|$ and $-\int  f\leq \int  |f|$ by Corollary \ref{cor1.2.9}.  Thus, $|\int  f|\leq \int  |f|$.

Lastly, we have
\begin{equation}
\int |f|=\sum_{n=1}^{\infty}\int g_n=\lim_{n\to \infty}\int |s_n|=\lim_{n\to \infty}\int |f_1+\cdots +f_n|. \nonumber
\end{equation}
\end{proof}

\begin{corollary}\label{cor1.2.11}
$\mathcal{U}^*$ is closed under lattice operations.
\end{corollary}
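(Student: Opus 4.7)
The plan is to deduce this corollary directly from the vector space structure of $\mathcal{U}^*$ established in Corollary \ref{cor1.2.9} together with the closure under absolute values given by Theorem \ref{thm1.2.10}. The standard identities relating lattice operations to linear operations and the absolute value in a Riesz space are
$$
f \vee g = \tfrac{1}{2}(f+g+|f-g|) \quad\text{and}\quad f \wedge g = \tfrac{1}{2}(f+g-|f-g|),
$$
so once we can form linear combinations in $\mathcal{U}^*$ and take absolute values within $\mathcal{U}^*$, we are done.

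Concretely, I would proceed as follows. Suppose $f, g \in \mathcal{U}^*$. By Corollary \ref{cor1.2.9}, $\mathcal{U}^*$ is a vector space, so $f - g \in \mathcal{U}^*$. Applying Theorem \ref{thm1.2.10} to $f-g$ gives $|f-g| \in \mathcal{U}^*$. Using the vector space property once more, the combinations $\tfrac{1}{2}(f+g+|f-g|)$ and $\tfrac{1}{2}(f+g-|f-g|)$ lie in $\mathcal{U}^*$. By the identities above, these combinations are $f \vee g$ and $f \wedge g$ respectively, proving closure under the binary lattice operations.

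There is essentially no obstacle here; the result is a formal consequence of the two preceding theorems, and the only thing to be careful about is to verify (or simply invoke) the standard pointwise identities $\max(a,b) = \tfrac{1}{2}(a+b+|a-b|)$ and $\min(a,b) = \tfrac{1}{2}(a+b-|a-b|)$, which hold for real numbers and hence pointwise for real-valued functions. Since Definition \ref{def1.2.1} specifies that a Riesz space is closed under $\vee$ and $\wedge$, this corollary simply says that the extended space $\mathcal{U}^*$ is itself a Riesz space, as one would hope.
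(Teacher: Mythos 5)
Your proof is correct and follows exactly the paper's argument: the identities $f\vee g=\tfrac12(f+g+|f-g|)$ and $f\wedge g=\tfrac12(f+g-|f-g|)$, combined with Corollary \ref{cor1.2.9} (vector space) and Theorem \ref{thm1.2.10} (closure under absolute value).
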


\begin{proof}For $f,g\in \mathcal{U}^*$,
\begin{equation}
f\lor g=\frac{1}{2}(f+g+|f-g|)\text{,  } f \wedge g=\frac{1}{2}(f+g-|f-g|).\nonumber
\end{equation}
These two identities, the fact that $\mathcal{U}^*$ is a vector space (Corollary \ref{cor1.2.9}) and Theorem \ref{thm1.2.10} gives our proof.
\end{proof}

Now we address the question of completeness of $\mathcal{U}^*$. The following lemma is a crucial step in that direction.

\begin{lemma}\label{lem1.2.12}
If $f\in \mathcal{U}^*$, then for every $\varepsilon >0$ there exists a sequence of functions $f_1,f_2,\ldots \in \mathcal{U}$ such that $f\simeq \sum\limits_{n=1}^{\infty}f_n$ and $\sum\limits_{n=1}^{\infty}\int |f_n| \leq \int  f+\varepsilon$.
\end{lemma}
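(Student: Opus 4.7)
The plan is to start with any representation of $f$ guaranteed by $f \in \mathcal{U}^*$ and then regroup it by collapsing a sufficiently long initial segment into a single function of $\mathcal{U}$, leaving behind a tail whose absolute integrals sum to less than $\varepsilon/2$. This replaces the possibly inflated sum $\sum_n \int |g_n|$ by something close to $\int |f|$ (which coincides with $\int f$ when $f \geq 0$).

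Concretely, fix any $g_1, g_2, \ldots \in \mathcal{U}$ with $f \simeq \sum_{n=1}^{\infty} g_n$, and put $s_N = g_1 + \cdots + g_N \in \mathcal{U}$. Applying Theorem \ref{thm1.2.10} to the representation $(g_n)$ yields $\int |s_N| \to \int |f|$ as $N \to \infty$. Combined with the summability condition $\mathbb{A}$, which gives $\sum_{n > N} \int |g_n| \to 0$, we may choose $N$ large enough so that simultaneously $\int |s_N| \leq \int |f| + \varepsilon/2$ and $\sum_{n > N} \int |g_n| < \varepsilon/2$. Then define the new representation by $f_1 := s_N$ and $f_k := g_{N+k-1}$ for $k \geq 2$; each $f_k$ lies in $\mathcal{U}$ since $\mathcal{U}$ is a vector space.

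What remains is to verify two things. First, $f \simeq \sum_{k=1}^{\infty} f_k$: the $k$th partial sum of $(f_k)$ equals the $(N+k-1)$th partial sum of $(g_n)$, so pointwise convergence and the limiting values are inherited from the original representation; moreover, the set $\{x : \sum_k |f_k(x)| < \infty\}$ coincides with $\{x : \sum_n |g_n(x)| < \infty\}$, because the two series differ only in that the finite block $|g_1(x)| + \cdots + |g_N(x)|$ has been replaced by the single value $|s_N(x)|$, which cannot affect whether the tail is absolutely convergent. Hence condition $\mathbb{B}$ transfers. Second, the integral estimate is immediate from our choice of $N$:
$$
\sum_{k=1}^{\infty} \int |f_k| \;=\; \int |s_N| + \sum_{n > N} \int |g_n| \;\leq\; \int |f| + \varepsilon.
$$

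The only subtle point I foresee is the transfer of condition $\mathbb{B}$: one must confirm that collapsing the finite block $g_1,\ldots,g_N$ into $s_N$ does not enlarge the exceptional set on which absolute convergence fails, so that the pointwise identity $f(x) = \sum_k f_k(x)$ really does hold wherever $\sum_k |f_k(x)| < \infty$. This reduces to the elementary fact that absolute convergence of a series is insensitive to any finite rearrangement or regrouping of its terms, together with the easy inequality $|s_N(x)| \leq |g_1(x)| + \cdots + |g_N(x)|$ in the other direction.
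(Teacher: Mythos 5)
Your proposal is correct and is essentially the paper's own argument: take a representation $f\simeq\sum g_n$, choose $N$ so that the tail of $\sum\int|g_n|$ is below $\varepsilon/2$ and (via Theorem \ref{thm1.2.10}) $\int|g_1+\cdots+g_N|\leq\int|f|+\varepsilon/2$, then collapse the initial block into $f_1=g_1+\cdots+g_N$. Note that, like the paper's proof, your bound is in terms of $\int|f|$ rather than the $\int f$ appearing in the statement, which is harmless in the way the lemma is subsequently applied.
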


\begin{proof} Let $\varepsilon >0$ be given and let $f\simeq\sum\limits_{n=1}^{\infty}g_n$.  Choose $n_1\in \mathbb{N}$ such that $\sum\limits_{n_1+1}^{\infty}\int |g_n|<\frac{\varepsilon}{2}$.  By Theorem \ref{thm1.2.10}, we have $\int |f|=\lim\limits_{n\to\infty}|g_1+\cdots +g_n|$, so there exists an $n_2\in\mathbb{N}$ such that 

\begin{equation}
\int |g_1+\cdots +g_n| < \int |f|+\frac{\varepsilon}{2} \nonumber
\end{equation}

for every $n\geq n_2$.  Let $n_0=\max(n_1,n_2)$ and define $f_1=g_1+\cdots +g_{n_0}$, $f_n=g_{n_0+n-1}$ for $n\geq 2$.  Then, $f\simeq \sum\limits_{n=1}^{\infty}f_n$ and 

\begin{equation}
\sum_{n=1}^{\infty}\int |f_n|=\int |g_1+\cdots +g_{n_0}|+\sum_{n_0+1}^{\infty}\int |g_n|\leq \int |f|+\frac{\varepsilon}{2}+\frac{\varepsilon}{2},\nonumber
\end{equation}

which completes our proof.
\end{proof}

\begin{theorem}\label{thm1.2.13}
If $f\simeq \sum\limits_{n=1}^{\infty}f_n$ with $f_n\in \mathcal{U}^*$, then $f\in \mathcal{U}^*$ and $\int f=\sum\limits_{n=1}^{\infty}\int f_n$.
\end{theorem}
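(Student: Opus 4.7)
The plan is to unfold each $f_n \in \mathcal{U}^*$ into a representation by functions in $\mathcal{U}$, combine all these into a single sequence, and then verify conditions $\mathbb{A}$ and $\mathbb{B}$ of Definition \ref{def1.2.3} for that sequence. The integral formula will then fall out from rearranging an absolutely convergent scalar series.

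First, fix $\varepsilon>0$ and apply Lemma \ref{lem1.2.12} to each $f_n$ with tolerance $\varepsilon/2^n$, obtaining functions $f_{n,k}\in\mathcal{U}$ such that $f_n\simeq\sum_{k=1}^\infty f_{n,k}$ and $\sum_{k=1}^\infty\int|f_{n,k}|\leq\int|f_n|+\varepsilon/2^n$ (using that $|f_n|\in\mathcal{U}^*$ by Theorem \ref{thm1.2.10}). Since $\sum_{n=1}^\infty\int|f_n|<\infty$ by condition $\mathbb{A}$ of the hypothesis $f\simeq\sum_{n=1}^\infty f_n$, summing on $n$ gives $\sum_{n,k}\int|f_{n,k}|\leq\sum_{n=1}^\infty\int|f_n|+\varepsilon<\infty$. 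Enumerate the double family $\{f_{n,k}\}$ as a sequence $(h_m)$ in $\mathcal{U}$ by any bijection (for instance diagonally); absolute convergence of the numerical double series makes condition $\mathbb{A}$ for $(h_m)$ immediate.

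Now for condition $\mathbb{B}$: suppose $x\in X$ satisfies $\sum_{m=1}^\infty|h_m(x)|<\infty$. Then for every $n$, $\sum_{k=1}^\infty|f_{n,k}(x)|<\infty$, so condition $\mathbb{B}$ applied to $f_n\simeq\sum_k f_{n,k}$ yields $\sum_{k=1}^\infty f_{n,k}(x)=f_n(x)$. Moreover
\begin{equation}
\sum_{n=1}^\infty |f_n(x)|\leq \sum_{n=1}^\infty\sum_{k=1}^\infty|f_{n,k}(x)|=\sum_{m=1}^\infty|h_m(x)|<\infty,\nonumber
\end{equation}
so condition $\mathbb{B}$ applied to $f\simeq\sum_n f_n$ gives $\sum_n f_n(x)=f(x)$. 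By absolute convergence of the double series, its value is independent of order, so $\sum_m h_m(x)=\sum_n\sum_k f_{n,k}(x)=\sum_n f_n(x)=f(x)$. This proves $f\simeq\sum_m h_m$ with $h_m\in\mathcal{U}$, so $f\in\mathcal{U}^*$. The integral identity then follows from $\int f=\sum_m\int h_m=\sum_n\sum_k\int f_{n,k}=\sum_n\int f_n$, where the last equality is the very definition of $\int f_n$ on $\mathcal{U}^*$, justified by Corollary \ref{cor1.2.7}.

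The main technical point is the bookkeeping of the double-series manipulation in the verification of $\mathbb{B}$: one must pass from pointwise absolute convergence of the flattened sequence $(h_m(x))$ to row-wise absolute convergence of $(f_{n,k}(x))_k$ for every $n$, apply $\mathbb{B}$ for each $f_n$, and then apply $\mathbb{B}$ for $f$ — each of these uses the hypothesis in a different direction, and it is the geometric choice $\varepsilon/2^n$ in Step 1 that makes the double integral sum converge and legitimizes the rearrangement.
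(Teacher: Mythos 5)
Your proposal is correct and follows essentially the same route as the paper: replace each $f_n\in\mathcal{U}^*$ by a representation in $\mathcal{U}$ with error budget summable in $n$ (the paper uses $2^{-n}$, you use $\varepsilon/2^n$, via Lemma \ref{lem1.2.12}), then flatten the double family into one sequence and check $\mathbb{A}$ and $\mathbb{B}$. In fact your write-up supplies the double-series bookkeeping that the paper compresses into the word ``clearly.''
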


\begin{proof} Let $f\simeq \sum\limits_{n=1}^{\infty}f_n$ with $f_n\in \mathcal{U}^*$.  Choose $g_{i_n}\in \mathcal{U}$, $i,n\in \NN$, such that 

\begin{equation}
f_i\simeq \sum_{n=1}^{\infty}g_{i_n}\text{ and } \sum_{n=1}^{\infty}\int |g_{i_n}|\leq \int  |f_i|+2^{-i}\text{ for } i=1,2,\ldots \nonumber
\end{equation}

Let $(h_n)$ be a sequence arranged from all the functions $g_{i_n}$.  Then clearly \\ $f\simeq \sum\limits_{n=1}^{\infty}h_n$ which implies $f\in \mathcal{U}^*$ and $\int f=\sum\limits_{n=1}^{\infty}\int f_n$.
\end{proof}

\begin{corollary}\label{cor1.2.14}
For every non-increasing sequence of functions $f_n\in\mathcal{U}^*$ such that $f_n(x)\to 0$ for every $x\in\mathcal{X}$, we have $\int f_n \to 0$.
\end{corollary}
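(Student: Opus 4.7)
The plan is to telescope and invoke Theorem \ref{thm1.2.13}. Set $g_n = f_n - f_{n+1}$ for $n \geq 1$. Each $g_n$ lies in $\mathcal{U}^*$ by Corollary \ref{cor1.2.9}, and $g_n \geq 0$ because $(f_n)$ is non-increasing with pointwise limit $0$ (which also forces each $f_n \geq 0$, hence $\int f_n \geq 0$). The partial sums telescope as $\sum_{n=1}^{N} g_n = f_1 - f_{N+1}$, which converges pointwise to $f_1$ at every $x \in X$.

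Next I would verify that $f_1 \simeq \sum_{n=1}^{\infty} g_n$ in the sense of Definition \ref{def1.2.3}, allowing summands in $\mathcal{U}^*$. For condition $\mathbb{A}$, since $g_n \geq 0$ we have $|g_n| = g_n$, and by linearity and monotonicity of $\int$ on $\mathcal{U}^*$ (Corollary \ref{cor1.2.9}),
\begin{equation}
\sum_{n=1}^{N}\int |g_n| = \int f_1 - \int f_{N+1} \leq \int f_1, \nonumber
\end{equation}
so the series $\sum \int |g_n|$ converges. For condition $\mathbb{B}$, the telescoping identity shows $\sum_{n=1}^{\infty} g_n(x) = f_1(x)$ at every $x \in X$, so $\mathbb{B}$ holds trivially (the exceptional set where the series of absolute values diverges is empty since $f_1(x)$ is a finite real number).

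Now I would apply Theorem \ref{thm1.2.13}, whose hypothesis explicitly permits summands in $\mathcal{U}^*$, to conclude
\begin{equation}
\int f_1 = \sum_{n=1}^{\infty}\int g_n = \lim_{N\to\infty}\left(\int f_1 - \int f_{N+1}\right). \nonumber
\end{equation}
Subtracting $\int f_1$ from both sides yields $\lim_{N\to\infty} \int f_{N+1} = 0$, as desired.

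I do not expect any serious obstacle. The only point that requires attention is that we need Theorem \ref{thm1.2.13} rather than just the definition of $\mathcal{U}^*$ (which only permits summands in $\mathcal{U}$); without that extension, the differences $f_n - f_{n+1}$ would not be admissible as summands. Since Theorem \ref{thm1.2.13} has already been established, the argument is a direct telescoping computation.
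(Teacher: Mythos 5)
Your proof is correct and is essentially the paper's own argument: the paper telescopes the same way, writing $0 \simeq f_1 + (f_2-f_1) + (f_3-f_2) + \cdots$ and invoking Theorem \ref{thm1.2.13}, which is just your decomposition $f_1 \simeq (f_1-f_2) + (f_2-f_3) + \cdots$ with the roles of the target function and the signs rearranged. You have simply supplied the verification of conditions $\mathbb{A}$ and $\mathbb{B}$ that the paper leaves implicit.
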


\begin{proof} The observation of $0\simeq f_1+(f_2-f_1)+(f_3-f_2)+\cdots$ combine with Theorem \ref{thm1.2.13} gives our proof.
\end{proof}

\begin{corollary} \label{cor2.5.1} Let $f_1, f_2, \dots \in  \mathcal{U}^*$. If  $\sum^\infty_{n=1}\int|f_n|  <  \infty$, then there exists $f\in \mathcal{U}^*$ such that $f  \simeq  f_1  +  f_2  + \dots $ . 
\end{corollary}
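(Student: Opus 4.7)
The plan is to build a candidate function $f$ pointwise and then appeal to Theorem \ref{thm1.2.13}. Concretely, I would define
\[
A = \Bigl\{x \in X : \sum_{n=1}^\infty |f_n(x)| < \infty\Bigr\},
\]
set $f(x) = \sum_{n=1}^\infty f_n(x)$ for $x \in A$, and put $f(x) = 0$ (any real value will do) for $x \notin A$. On $A$ the series converges absolutely and hence converges, so this genuinely produces a real-valued function on all of $X$.

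Next I would check that $f \simeq \sum_{n=1}^\infty f_n$, interpreted in the sense in which $\simeq$ is already used in Theorem \ref{thm1.2.13} with summands from $\mathcal{U}^*$. Condition $\mathbb{A}$ is nothing but the hypothesis $\sum_{n=1}^\infty \int |f_n| < \infty$, while condition $\mathbb{B}$ is built into the definition of $f$: wherever $\sum |f_n(x)|$ converges, i.e.\ for $x \in A$, we have $f(x) = \sum f_n(x)$ by fiat. Theorem \ref{thm1.2.13} then immediately yields $f \in \mathcal{U}^*$ (and, as a bonus, $\int f = \sum_{n=1}^\infty \int f_n$).

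The main thing to confirm is that the relation $\simeq$ is legitimate when the summands $f_n$ live in $\mathcal{U}^*$ rather than in $\mathcal{U}$. This is not a real obstacle: Theorem \ref{thm1.2.10} gives $|f_n| \in \mathcal{U}^*$, and the integral has already been extended to $\mathcal{U}^*$ in Corollary \ref{cor1.2.9}, so each $\int |f_n|$ is a well-defined real number and the series $\sum \int|f_n|$ has its usual meaning. Thus the whole proof reduces to writing down the pointwise construction of $f$; once that is in place, Theorem \ref{thm1.2.13} supplies the rest and the corollary follows.
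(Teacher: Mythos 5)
Your proposal is correct and matches the paper's proof: the paper likewise defines $f$ pointwise as $\sum_{n=1}^\infty f_n(x)$ where the series converges absolutely and $0$ elsewhere, with membership in $\mathcal{U}^*$ following from Theorem \ref{thm1.2.13}. Your write-up simply makes explicit the verification of conditions $\mathbb{A}$ and $\mathbb{B}$ that the paper leaves implicit.
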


\begin{proof}  The function $f$ can be defined as follows:
$$
f(x)=\begin{cases} 
\sum^\infty_{n=1}f_n(x) & \text{ whenever }
\sum^\infty_{n=1}|f_n(x)|  <  \infty,\\
0 & \text{ otherwise.}
\end{cases}  
$$
\end{proof}

\begin{theorem}\label{thm1.2.15}
Every Daniell space $(X,\mathcal{U},\int)$ can be extended to a complete Daniell space $(X,\mathcal{U}^*,\int )$.
\end{theorem}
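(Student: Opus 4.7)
The plan is to assemble the pieces already established and verify each clause of the definition of a complete Daniell space in turn. Nothing new needs to be proved; the work lies in matching each requirement to the correct prior result and, in particular, checking that the extension really is an extension, i.e.\ that $\mathcal{U}\subseteq\mathcal{U}^*$ and that the integrals agree on $\mathcal{U}$.

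First I would deal with the ``extension'' part. For any $f\in\mathcal{U}$ the representation $f\simeq f+0+0+\cdots$ (with each $|f_n|=0$ for $n\geq 2$) satisfies $\mathbb{A}$ and $\mathbb{B}$ of Definition \ref{def1.2.3}, so $f\in\mathcal{U}^*$. Corollary \ref{cor1.2.7} ensures that the value $\int f=\sum_{n=1}^\infty \int f_n$ is independent of the representation, and the specific representation above gives the original integral, so the new $\int$ restricts to the old one on $\mathcal{U}$.

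Next I would verify that $(X,\mathcal{U}^*,\int)$ is a Daniell space. By Corollary \ref{cor1.2.9}, $\mathcal{U}^*$ is a vector space of real-valued functions on $X$ and $\int:\mathcal{U}^*\to\RR$ is linear; Corollary \ref{cor1.2.11} gives closure under $\vee$ and $\wedge$, so $\mathcal{U}^*$ is a Riesz space. Condition~I of Definition \ref{def1.2.2} is the monotonicity statement in Corollary \ref{cor1.2.9} applied to $0\leq f$ (or equivalently Lemma \ref{thm1.2.6}), and Condition~II is exactly the content of Corollary \ref{cor1.2.14}. Thus all the axioms of a Daniell space hold for $(X,\mathcal{U}^*,\int)$.

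Finally, completeness in the sense of Definition \ref{def1.2.4} asks that whenever $f\simeq\sum_{n=1}^\infty f_n$ with $f_n\in\mathcal{U}^*$, the function $f$ must already lie in $\mathcal{U}^*$; this is precisely Theorem \ref{thm1.2.13}. There is no real obstacle here, since Lemma \ref{lem1.2.12} and Theorem \ref{thm1.2.13} together were designed to bridge from $\mathcal{U}$-representations to $\mathcal{U}^*$-representations by rearranging a double series of $\mathcal{U}$-functions into a single one. Combining these observations completes the proof.
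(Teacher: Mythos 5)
Your proposal is correct and follows essentially the same route as the paper: it assembles Lemma \ref{thm1.2.6}, Corollaries \ref{cor1.2.9}, \ref{cor1.2.11}, and \ref{cor1.2.14} for the Daniell-space axioms, Theorem \ref{thm1.2.13} for completeness, and the representation $f\simeq f+0+0+\cdots$ (with Corollary \ref{cor1.2.7}) for the extension property, which the paper records in the remark preceding Corollary \ref{cor1.2.9}. Your write-up merely makes explicit the bookkeeping that the paper's proof states in compressed form.
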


\begin{proof} To show $(X,\mathcal{U}^*,\int )$ is a Daniell space, we need to satisfy conditions I and II from Definition \ref{def1.2.2}.  Both of these conditions are satisfied as a result of Theorem \ref{thm1.2.6} and Corollaries \ref{cor1.2.9}, \ref{cor1.2.11} and \ref{cor1.2.14}.  A direct result of Theorem \ref{thm1.2.13} shows that  $(X,\mathcal{U}^*,\int )$ is complete.
\end{proof}

 From our construction and the definition of completeness it is clear that $(X,\mathcal{U}^*,\int )$ is the smallest complete extension of $(X,\mathcal{U},\int )$.  
 
In the remainder of this article we assume that $(X,\mathcal{U},\int )$ is a complete Daniell space.

\section{Norm in a Daniell space}

\begin{definition}[Norm in $\UU$] \label{def2.6.1} The functional
$\|\cdot\| : \UU  \rightarrow  \mathbb R$
defined by $\|f\|  =  \int |f|$ is called the {\it norm} in $(X,\mathcal{U},\int )$.
\end{definition}

The functional $\|\cdot\|$ is well-defined in view of Theorem \ref{thm1.2.10}.  By Corollary \ref{cor1.2.9},
we have 
$$
\|\lambda f\|  =  \int|\lambda f|  =  \int|\lambda||f|  =  
|\lambda|\int|f|  =  |\lambda|\|f\|.
$$
Since $|f  +  g|  \leq  |f|  +  |g|$, we have
$$
\|f  +  g\|  =  \int|f  +  g|  \leq  \int|f|  +  \int|g|  =  \|f\|  +  \|g\|,
$$
by the same corollary.  However, $\|\cdot\|$ need not be a norm since, in general, $\|f\|=0$ does not imply $f=0$. 

\begin{definition}[Null function] \label{def2.6.2}  A function $f\in \UU$ is called a {\it null function} if $\int|f| = 0$. 
\end{definition}

\begin{theorem} \label{thm2.6.1} If $f\in \UU$ is a null function and $|g|  \leq  |f|$, then $g\in \UU$ and $g$ is a null function.
\end{theorem}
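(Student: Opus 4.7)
The plan is to exhibit $g$ as a series of functions in $\mathcal{U}$ of the form prescribed by Definition~\ref{def1.2.3}, then invoke completeness to deduce $g\in\mathcal{U}$, and finally use monotonicity of the integral to conclude it is a null function. Since $\mathcal{U}$ is assumed complete, the extension machinery developed in Section~3 will do all the work for us once we produce a suitable series.

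First I would set $f_n=|f|$ for $n$ odd and $f_n=-|f|$ for $n$ even. Each $f_n$ lies in $\mathcal{U}$ because $\mathcal{U}$ is a Riesz space and thus contains $|f|$. Condition $\mathbb{A}$ is immediate: $\sum_{n=1}^\infty \int|f_n|=\sum_{n=1}^\infty \int|f|=0<\infty$ since $f$ is a null function. For condition $\mathbb{B}$, note that $\sum_{n=1}^\infty|f_n(x)|=\sum_{n=1}^\infty|f(x)|$, which is finite if and only if $f(x)=0$. At any such point the hypothesis $|g|\leq|f|$ forces $g(x)=0$, which coincides with the (alternating, telescoping) sum $\sum_{n=1}^\infty f_n(x)=0$. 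Therefore $g\simeq \sum_{n=1}^\infty f_n$, and completeness of $(X,\mathcal{U},\int)$ yields $g\in\mathcal{U}$.

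Having placed $g$ in $\mathcal{U}$, Theorem~\ref{thm1.2.10} gives $|g|\in\mathcal{U}$. From $0\leq|g|\leq|f|$ and the monotonicity of $\int$ on $\mathcal{U}^*$ (Corollary~\ref{cor1.2.9}) we get $0\leq\int|g|\leq\int|f|=0$, so $\int|g|=0$ and $g$ is a null function.

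The only subtle point is the verification of $\mathbb{B}$: one has to recognize that the implication $f(x)=0\Rightarrow g(x)=0$ built into the hypothesis exactly matches the set of $x$ on which absolute convergence of the series fails, so the condition is satisfied vacuously elsewhere. Aside from this observation, everything is a direct application of previously established results.
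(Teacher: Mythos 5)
Your proof is correct and follows essentially the same route as the paper: the paper exhibits $g \simeq |f| + |f| + \cdots$ (your alternating signs $\pm|f|$ are an immaterial variation, since in both cases all the integrals vanish and absolute convergence of the series at $x$ forces $f(x)=0$, hence $g(x)=0$), then invokes completeness and monotonicity exactly as you do.
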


\begin{proof}  Note that 
\begin{equation}\label{eq2.6.1}
g  \simeq  |f|  +  |f|  + \dots 
\end{equation}
In fact, since $f$ is a null function, we have 
$\int|f|  +  \int|f|  + \dots  = 0  +  0  + \dots   <  \infty$.
Moreover, if the series $f(x) + f(x) + \dots $ is absolutely
convergent at some $x \in  X$, then $f(x) = 0$. 
But then $g(x) = 0$, and we have $g(x) = |f(x)|  +  |f(x)|  + \dots $. 
This proves that \eqref{eq2.6.1} holds, and thus $g\in \UU$. Clearly, $g$ is a null function.
\end{proof}

\begin{definition}
 Functions $f,g\in\UU$ are called {\it equivalent} if $f-g$ is a null function.
\end{definition}
 
 It is easy to check that the defined relation is an equivalence in $\UU$.

Let ${\mathfrak U}$ be the space of equivalence classes in $\UU$. The equivalence class of $f \in \UU$ is denoted by $[f]$,
i.e.,
$$
[ f ] = \left\{g  \in  \UU : \int|f  -  g| = 0\right\}.
$$
It is easy to check that 
$$
[ f ]  +  [ g ] = [ f  +  g ], \quad
\lambda [ f ] = [\lambda f ], \quad 
\|[ f ]\| = \int|f|
$$
are well defined and that $({\mathfrak U}, \|\cdot\|)$ is
a normed space.  We will show later that it is a Banach space.

In practice, we often do not distinguish between $\UU$ and ${\mathfrak U}$ and formulate everything in terms of $\UU$. We also refer to $\|\cdot\|$ in $\UU$ as a norm.  This abuse of language does not lead to any problems as long as we remember what it means. 
 
\begin{definition} \label{def2.6.3} {\rm (Convergence in norm)} We say that a sequence of functions $f_1, f_2, \dots \in \UU$
converges to a function $f \in  \UU$ {\it in norm},
denoted by $f_n \rightarrow  f$ i.n., if $\|f_n-f\|  \rightarrow  0$.
\end{definition} 

As a convergence defined by a norm, it has the following properties:

{\it
If $f_n  \rightarrow  f$ i.n. and
$\lambda  \in  \mathbb R$, then 
$\lambda f_n  \rightarrow  \lambda f$ i.n.

If $f_n  \rightarrow  f$ i.n. and $g_n  \rightarrow  g$ i.n.,
then $f_n + g_n  \rightarrow  f + g$ i.n.

\noindent
Moreover

If $f_n  \rightarrow  f$ i.n., then $|f_n|  \rightarrow  |f|$ i.n.,}

\bigskip

\noindent
which follows immediately from the inequality 
$$\left| |f_n|  -  |f| \right|  \leq  |f_n  -  f|.$$

\begin{theorem} \label{thm2.6.2} If $f_n  \rightarrow  f$ i.n., then 
$\int f_n  \rightarrow  \int f$.
\end{theorem}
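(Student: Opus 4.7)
The plan is to reduce the statement to a direct application of the inequality $|\int g| \leq \int |g|$ from Theorem \ref{thm1.2.10}, combined with the linearity of $\int$ on $\UU$ (Corollary \ref{cor1.2.9}). Since we are assuming $(X,\UU,\int)$ is already complete, both $f$ and $f_n$ lie in $\UU = \UU^*$, and all previously established facts about $\int$ apply without modification.

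First I would use linearity of the integral to write $\int f_n - \int f = \int (f_n - f)$. Then Theorem \ref{thm1.2.10} gives $\bigl|\int (f_n - f)\bigr| \leq \int |f_n - f| = \|f_n - f\|$. Combining these two observations yields
\[
\Bigl|\int f_n - \int f\Bigr| \leq \|f_n - f\|.
\]
By the hypothesis $f_n \to f$ i.n., the right-hand side tends to $0$, so the left-hand side does as well, which is exactly the conclusion $\int f_n \to \int f$.

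There is essentially no obstacle here; the theorem is the standard continuity of a bounded linear functional with respect to its defining norm, and both required ingredients (linearity and the inequality $|\int g| \leq \int |g|$) are already in place. The only thing worth noting explicitly is that $f_n - f \in \UU$, which follows because $\UU$ is a vector space (Corollary \ref{cor1.2.9}), so that Theorem \ref{thm1.2.10} may legitimately be invoked on the difference.
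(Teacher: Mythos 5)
Your proof is correct and is essentially identical to the paper's: the paper's entire argument is the one-line chain $\bigl|\int f_n-\int f\bigr| = \bigl|\int(f_n-f)\bigr| \leq \int|f_n-f| \rightarrow 0$, which is exactly your combination of linearity and the inequality $|\int g|\leq\int|g|$ from Theorem \ref{thm1.2.10}. Your explicit remark that $f_n-f\in\UU$ via Corollary \ref{cor1.2.9} is a harmless extra justification the paper leaves implicit.
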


\begin{proof} 
$|\int f_n-\int f| = |\int(f_n-f)|  \leq  \int|f_n-f| 
\rightarrow  0$. 
\end{proof}

\begin{theorem} \label{thm2.6.3} If $f  \simeq  f_1  +  f_2  + \dots $ , then the series $f_1  +  f_2  + \dots $ converges to $f$ in norm. 
\end{theorem}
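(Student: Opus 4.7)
The plan is to show that the partial sums $s_n = f_1 + \cdots + f_n$ satisfy $\|f - s_n\| \to 0$, by identifying $f - s_n$ as the function represented by the tail of the series $\sum f_k$ and then estimating its norm by the tail of the numerical series $\sum \int |f_k|$.

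First I would verify that $f - s_n \simeq f_{n+1} + f_{n+2} + \cdots$. Condition $\mathbb{A}$ holds because $\sum_{k=n+1}^\infty \int |f_k|$ is the tail of the convergent series $\sum_{k=1}^\infty \int |f_k|$. For condition $\mathbb{B}$, observe that $\sum_{k=n+1}^\infty |f_k(x)|$ converges if and only if $\sum_{k=1}^\infty |f_k(x)|$ converges, and at any such $x$ the hypothesis $f(x) = \sum_{k=1}^\infty f_k(x)$ yields $(f-s_n)(x) = \sum_{k=n+1}^\infty f_k(x)$, as required. Consequently $f - s_n \in \mathcal{U}^*$, and by Theorem \ref{thm1.2.10} the function $|f - s_n|$ is also in $\mathcal{U}^*$ with
\begin{equation}
\int |f - s_n| = \lim_{m \to \infty} \int \bigl| f_{n+1} + \cdots + f_{n+m} \bigr|. \nonumber
\end{equation}

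Next I would bound each integrand using the pointwise triangle inequality $|f_{n+1} + \cdots + f_{n+m}| \leq |f_{n+1}| + \cdots + |f_{n+m}|$ together with the monotonicity of the integral on $\mathcal{U}^*$ (Corollary \ref{cor1.2.9}), obtaining
\begin{equation}
\int \bigl| f_{n+1} + \cdots + f_{n+m} \bigr| \leq \sum_{k=n+1}^{n+m} \int |f_k|. \nonumber
\end{equation}
Letting $m \to \infty$ gives $\|f - s_n\| = \int |f - s_n| \leq \sum_{k=n+1}^\infty \int |f_k|$.

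Finally, since $\sum_{k=1}^\infty \int |f_k| < \infty$ by condition $\mathbb{A}$, its tail tends to $0$, so $\|f - s_n\| \to 0$. I do not anticipate a serious obstacle here: the only subtle point is the verification that $f - s_n$ is represented by the tail series, which requires one to unpack condition $\mathbb{B}$ carefully, but once that is in hand Theorem \ref{thm1.2.10} does essentially all of the analytic work.
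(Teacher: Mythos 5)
Your proposal is correct and follows essentially the same route as the paper's proof: identify $f - (f_1+\cdots+f_n)$ as being represented by the tail series $f_{n+1}+f_{n+2}+\cdots$, then use Theorem \ref{thm1.2.10} to bound $\int|f - f_1 - \cdots - f_n|$ by the tail $\sum_{k>n}\int|f_k|$, which tends to $0$ by condition $\mathbb{A}$. You simply spell out the verification of conditions $\mathbb{A}$ and $\mathbb{B}$ and the triangle-inequality estimate in more detail than the paper does.
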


\begin{proof}  Let $\varepsilon > 0$. There exists an integer $n_0$ such that  $\sum^\infty_{n=n_0}\int|f_n| < \varepsilon$. Since
$$
f - f_1 - \dots   -  f_n \simeq f_{n+1}  +  f_{n+2}  +  \dots  ,
$$
for every $n > n_0$, we have
$$
\int|f  -  f_1  - \dots  -  f_n| \leq
\int|f_{n+1}|  +  \int|f_{n+2}|  + \dots   <  \varepsilon,
$$
by Theorem \ref{thm1.2.10}.  
\end{proof}

\section{Convergence Almost Everywhere}

If $f \simeq f_1 + f_2 + \dots $ then the series $f_1(x) + f_2(x) + \dots $ need not converge at every $x \in X$.  

\begin{definition}[Null set] \label{def2.7.1} A set $S \subseteq X$ is called a {\it null set} if its characteristic function is a null function.
\end{definition} 

\begin{theorem}\label{thm2.7.1}
 A subset of a null set is a null set. A countable union of null sets is a null set.
\end{theorem}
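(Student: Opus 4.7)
The first claim is immediate from Theorem~\ref{thm2.6.1}: if $S \subseteq T$ with $T$ a null set, then $0 \le \chi_S \le \chi_T$, so $|\chi_S| \le |\chi_T|$, and Theorem~\ref{thm2.6.1} then gives that $\chi_S \in \mathcal{U}$ is a null function, i.e., $S$ is null.

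For the countable union, let $S_1, S_2, \dots$ be null sets and put $S = \bigcup_n S_n$. The most direct attempt, $\chi_S \simeq \chi_{S_1} + \chi_{S_2} + \cdots$, fails condition $\mathbb{B}$ of Definition~\ref{def1.2.3}, because a point lying in several $S_n$ would give $\sum_n \chi_{S_n}(x) > 1 = \chi_S(x)$. I would sidestep this by disjointifying: set $T_n = S_1 \cup \cdots \cup S_n$ (with $T_0 = \emptyset$) and $g_n = \chi_{T_n} - \chi_{T_{n-1}}$.

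The next step is to check that each $T_n$ is a null set. Since $\mathcal{U}$ is a Riesz space, $\chi_{T_n} = \chi_{S_1} \lor \cdots \lor \chi_{S_n} \in \mathcal{U}$, and from $0 \le \chi_{T_n} \le \sum_{i=1}^n \chi_{S_i}$ combined with linearity and monotonicity of the integral (Corollary~\ref{cor1.2.9}) I obtain $\int \chi_{T_n} \le \sum_{i=1}^n \int \chi_{S_i} = 0$. Consequently each $g_n \in \mathcal{U}$ is a non-negative function with $\int g_n = 0$.

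Finally I would verify $\chi_S \simeq g_1 + g_2 + \cdots$. Condition $\mathbb{A}$ is immediate: $\sum_n \int |g_n| = 0 < \infty$. For condition $\mathbb{B}$, the key observation is that the nesting $T_{n-1} \subseteq T_n$ forces $g_n(x) \in \{0,1\}$ with $g_n(x) = 1$ for at most one $n$; hence $\sum_n g_n(x)$ is a finite sum, absolutely convergent at every point, with value $\chi_S(x)$. By completeness of $(X, \mathcal{U}, \int)$ this yields $\chi_S \in \mathcal{U}$ and $\int |\chi_S| = \sum_n \int g_n = 0$, so $S$ is null. The only real obstacle here is spotting the double-counting problem in the naive approach; once the $T_n$ are disjointified, the rest is bookkeeping.
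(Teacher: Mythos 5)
Your proof is correct, but for the countable union you take a genuinely different route from the paper. The paper handles that part in two lines: since $\sum_n \int|\chi_{S_n}| = 0 < \infty$, Corollary~\ref{cor2.5.1} produces an $f \in \mathcal{U}$ with $f \simeq \chi_{S_1} + \chi_{S_2} + \cdots$, which is a null function; then $\chi_{S_1 \cup S_2 \cup \cdots} \leq f$ and Theorem~\ref{thm2.6.1} (the same domination result you use for the first claim) finishes the argument. Note that the paper does not claim $\chi_S \simeq \sum_n \chi_{S_n}$ -- the double-counting problem you flag is precisely why it passes through Corollary~\ref{cor2.5.1} and a dominating null function instead. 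Your disjointification $g_n = \chi_{T_n} - \chi_{T_{n-1}}$, with each $T_n$ shown null via the lattice operations and monotonicity of the integral, instead exhibits $\chi_S \simeq g_1 + g_2 + \cdots$ directly, with condition $\mathbb{B}$ holding at \emph{every} point because at most one term is nonzero; completeness then gives $\chi_S \in \mathcal{U}$ with $\int \chi_S = 0$. What each buys: the paper's argument is shorter given the machinery already in place and needs no pointwise bookkeeping, while yours is more self-contained (it uses only completeness, positivity, monotonicity, and the Riesz structure, not Corollary~\ref{cor2.5.1} or Theorem~\ref{thm2.6.1} for the union) and avoids any care about how a representative from Corollary~\ref{cor2.5.1} behaves at points lying in infinitely many $S_n$, where the series $\sum_n \chi_{S_n}(x)$ diverges and the domination $\chi_S \leq f$ depends on how $f$ is chosen there. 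Both arguments are sound.
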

 
\begin{proof}
 The fact that a subset of a null set is a null set is a direct consequence of Theorem \ref{thm2.6.1}.  If $A_1,A_2, \dots \subset X$ are null sets, then there is $f\in\UU$ such that
 $$
 f \simeq \chi_{A_1}+\chi_{A_2}+\dots,  
 $$
 by Corollary \ref{cor2.5.1}. Since $\chi_{A_1 \cup A_2 \cup \dots} \leq f $ and $f$ is a null function, $A_1 \cup A_2 \cup \dots$ is a null set by Theorem \ref{thm2.6.1}.
\end{proof}

\begin{definition}[Equality almost everywhere] \label{def2.7.2}  Let $f,g : X \to \RR$. If the set of all
$x \in X$ for which $f(x) \neq g(x)$ is a null set,
then we say that {\it $f$ equals $g$ almost everywhere} and write $f = g$ a.e.. 
\end{definition} 

\begin{theorem} \label{thm2.7.2} $f = g$ a.e. if and only if $\int|f-g| = 0$.   
\end{theorem}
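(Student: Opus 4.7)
The plan is to prove the two implications separately, with the key subtlety being that the statement implicitly requires $|f-g|\in\UU$ for the integral on the right to make sense; the forward direction therefore needs to \emph{produce} this fact, while the reverse direction gets it for free as a hypothesis.

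For the forward direction, let $N=\{x\in X : f(x)\neq g(x)\}$, which is a null set by hypothesis, so $\chi_N\in\UU$ with $\int\chi_N=0$. The idea is to realize $|f-g|$ as the series
$$
|f-g|\simeq \chi_N+\chi_N+\chi_N+\cdots.
$$
Condition $\mathbb{A}$ is immediate since $\sum_{n=1}^\infty\int\chi_N=0<\infty$. For condition $\mathbb{B}$, the series of absolute values $\sum|\chi_N(x)|$ converges at a point $x$ if and only if $\chi_N(x)=0$, i.e., $x\notin N$; at any such point we have $f(x)=g(x)$, so $|f(x)-g(x)|=0=\sum\chi_N(x)$, as required. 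Invoking completeness of $(X,\UU,\int)$ gives $|f-g|\in\UU$, and Corollary \ref{cor1.2.7}/definition of the extended integral yields $\int|f-g|=\sum\int\chi_N=0$.

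For the reverse direction, assume $|f-g|\in\UU$ is a null function. Decompose the ``bad'' set as
$$
N=\{x:f(x)\neq g(x)\}=\bigcup_{n=1}^\infty N_n, \qquad N_n=\{x:|f(x)-g(x)|\geq 1/n\}.
$$
The pointwise inequality $\chi_{N_n}\leq n\,|f-g|$ holds trivially (check the two cases $x\in N_n$ and $x\notin N_n$). Since $n\,|f-g|$ is a null function in $\UU$ (by linearity, Corollary \ref{cor1.2.9}), Theorem \ref{thm2.6.1} applies to give $\chi_{N_n}\in\UU$ and $\int\chi_{N_n}=0$, so each $N_n$ is a null set. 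Theorem \ref{thm2.7.1} then shows $N$ itself is a null set, which is exactly $f=g$ a.e.

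The main obstacle is the forward direction: we must manufacture $|f-g|$ as an element of $\UU$ starting from nothing more than a set-theoretic equality off a null set, and the series trick above (together with completeness) is what makes this possible. The reverse direction is the easier part, but it requires recognizing that one should slice $N$ by level sets $\{|f-g|\geq 1/n\}$ in order to apply the domination theorem \ref{thm2.6.1}, since $|f-g|$ need not dominate $\chi_N$ directly.
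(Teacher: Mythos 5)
Your proof is correct. The forward direction is essentially identical to the paper's: both realize $|f-g|$ via the series $\chi_N+\chi_N+\cdots$ (the paper writes $h+h+\cdots$) and invoke completeness to get $|f-g|\in\UU$ with zero integral. Your reverse direction, however, takes a genuinely different route. The paper exploits the same series trick symmetrically: from $\int|f-g|=0$ it writes $\chi_N\simeq |f-g|+|f-g|+\cdots$ (condition $\mathbb{B}$ holds because absolute convergence of the series at $x$ forces $f(x)=g(x)$, hence $\chi_N(x)=0$), so completeness immediately gives $\chi_N\in\UU$ and $\int\chi_N=0$ in one stroke. You instead run a Chebyshev-type argument: slice $N$ into the level sets $N_n=\{|f-g|\geq 1/n\}$, dominate $\chi_{N_n}\leq n|f-g|$, apply Theorem \ref{thm2.6.1} to conclude each $N_n$ is null, and then use Theorem \ref{thm2.7.1} for the countable union. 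This is valid and non-circular (both cited results precede the statement and do not depend on it), and it is the argument one would give in a standard measure-theoretic setting where $\chi_N$ itself need not be dominated by any multiple of $|f-g|$; what it costs is an extra appeal to the countable-union theorem, whereas the paper's symmetric use of the $\simeq$ machinery handles the reverse direction with the same single device as the forward one.
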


\begin{proof}  Let $h$ be the characteristic function of the set $Z$ of all $x \in X$ for which $f(x) \neq g(x)$.
  
If $f = g$ a.e., then $\int|h| = \int h = 0$. 
Therefore 
$$
|f - g| \simeq h + h + \dots  ,
$$
which implies $\int|f-g| = 0$. 

Conversely, if $\int|f-g| = 0$, then
$$
h \simeq |f-g| + |f-g| + \dots  ,
$$
and hence $\int h = 0$.  This shows that $Z$ is a null set, i.e.,
$f = g$ a.e. 
 
\end{proof}

\begin{corollary}\quad
\begin{itemize}
\item[(a)] $f$ is a null function if and only if $f = 0$ a.e.  
\item[(b)] $f$ and $g$ are equivalent if and only if $f = g$ a.e. 
\end{itemize} 
\end{corollary}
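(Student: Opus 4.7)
The plan is to observe that both parts of this corollary are essentially immediate consequences of Theorem \ref{thm2.7.2} combined with the definitions of null function and equivalence, so the ``proof'' amounts to unwinding definitions. No new constructions, limit arguments, or approximations are needed; the real content was already packaged into Theorem \ref{thm2.7.2}.

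For part (a), I would start from the definition of null function: $f$ is a null function iff $\int |f| = 0$. Since $0 \in \UU$ (as $\UU$ is a vector space) and $|f-0| = |f|$, this is the same as $\int |f - 0| = 0$, which by Theorem \ref{thm2.7.2} is equivalent to $f = 0$ a.e. Both implications come for free from the ``iff'' in Theorem \ref{thm2.7.2}.

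For part (b), I would similarly chain: $f$ and $g$ are equivalent iff $f - g$ is a null function (by definition of equivalence) iff $\int |f - g| = 0$ (by definition of null function) iff $f = g$ a.e. (by Theorem \ref{thm2.7.2}). Again each step is an ``iff,'' so the result follows directly. Alternatively, part (b) can be reduced to part (a) by applying (a) to the function $f - g$, since ``$f-g$ is a null function'' is the definition of equivalence and ``$f - g = 0$ a.e.'' is the definition of $f = g$ a.e.

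There is essentially no obstacle here; the only thing to be careful about is making sure the functions involved are actually in $\UU$ so that the integrals and the hypothesis of Theorem \ref{thm2.7.2} make sense — this is fine because $\UU$ is a vector space (Corollary \ref{cor1.2.9}), so $0 \in \UU$ in part (a) and $f - g \in \UU$ in part (b). Given the brevity, I would present the argument as a short two-line proof rather than as two separate displayed derivations.
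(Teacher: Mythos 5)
Your proposal is correct and is exactly the intended argument: the paper states this corollary without proof, treating it as an immediate consequence of Theorem \ref{thm2.7.2} together with the definitions of null function and equivalence, which is precisely the unwinding you carry out (including the reduction of (b) to the case $f-g$). Nothing further is needed.
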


Note that we do not need to know the value of a function at every point in order to find its integral.  It is sufficient to know the values almost everywhere, i.e., everywhere except a null set.  The function need not even be defined at every point.

\begin{theorem} \label{thm2.7.3}  Suppose $f_n \rightarrow f$ i.n. Then $f_n \rightarrow g$  i.n. if and only if $f = g$ a.e..
\end{theorem}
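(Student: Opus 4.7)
The plan is to prove both implications by a direct application of the triangle inequality for $\|\cdot\|$ (established in the discussion following Definition \ref{def2.6.1}) together with Theorem \ref{thm2.7.2}, which translates the condition $\int|f-g|=0$ into the statement $f=g$ a.e.

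First I would handle the forward direction. Assume $f_n\to f$ i.n. and $f_n\to g$ i.n. By the triangle inequality,
$$
\|f-g\|\leq \|f-f_n\|+\|f_n-g\|,
$$
and both terms on the right tend to $0$. Hence $\|f-g\|=\int|f-g|=0$, so by Theorem \ref{thm2.7.2} we conclude $f=g$ a.e.

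For the converse, assume $f=g$ a.e. Then Theorem \ref{thm2.7.2} gives $\int|f-g|=0$, i.e.\ $\|f-g\|=0$. Applying the triangle inequality in the other direction,
$$
\|f_n-g\|\leq \|f_n-f\|+\|f-g\|=\|f_n-f\|\to 0,
$$
so $f_n\to g$ i.n.

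There is essentially no obstacle here; the statement is the standard fact that limits in a seminormed space are unique up to equivalence classes. Everything needed (the triangle inequality, absolute homogeneity, and the characterization of null functions via a.e.\ equality) has already been established, so the proof is a two-line application of these.
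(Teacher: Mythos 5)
Your proof is correct and follows essentially the same route as the paper: both directions rest on the triangle inequality for $\|\cdot\|$ together with Theorem \ref{thm2.7.2} translating $f=g$ a.e.\ into $\int|f-g|=0$ (the paper phrases the forward direction as $f_n-f_n\to f-g$ i.n., which is the same triangle-inequality argument in disguise). No gaps.
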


\begin{proof}  If $f_n \rightarrow f$ i.n. and $f = g$ a.e., then
$$
\|f_n-g\| = \int|f_n-g| \leq \int|f_n-f| + 
\int|f-g| = \int|f_n-f| = \|f_n-f\| 
\rightarrow 0.
$$

If $f_n \rightarrow f$ i.n. and $f_n \rightarrow g$ i.n., then 
$f_n-f_n \rightarrow f-g$ i.n..  This implies  
$$
\int|f-g|  =  \int|f_n-f_n-f + g|  \rightarrow  0,
$$
completing the proof.
\end{proof}

\begin{definition} \label{convae} {\rm (Convergence almost everywhere)} We say that a sequence of functions $f_1,  f_2,\dots $, defined on $X$  converges to $f$ {\it almost everywhere}, denoted by $f_n \rightarrow f$ a.e., if $f_n(x) \rightarrow f(x)$ for every $x$ except a null set.
\end{definition} 

Convergence almost everywhere has properties similar to convergence in norm.

{\it
If $ f_n \rightarrow f$ a.e. and
$\lambda \in \mathbb R$, then 
$\lambda f_n \rightarrow \lambda f$ a.e.

If $f_n \rightarrow f$ a.e. and $g_n \rightarrow g$ a.e.,
then  $f_n + g_n \rightarrow f + g$ a.e.

If $f_n \rightarrow f$ a.e., then $|f_n| \rightarrow |f|$
a.e.
}

\begin{theorem} \label{thm2.7.4} Suppose $f_n \rightarrow f$ a.e. Then $f_n \rightarrow g$ a.e. if and only if $f = g$ a.e.
\end{theorem}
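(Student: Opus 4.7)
The plan is to mirror the argument of Theorem \ref{thm2.7.3} but in the pointwise setting. The only tools I will need are uniqueness of limits in $\RR$ and the fact from Theorem \ref{thm2.7.1} that a finite (indeed, countable) union of null sets is a null set.

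For the forward direction, assume $f_n \rightarrow f$ a.e.\ and $f_n \rightarrow g$ a.e. I would introduce the null sets $N_1 = \{x \in X : f_n(x) \not\to f(x)\}$ and $N_2 = \{x \in X : f_n(x) \not\to g(x)\}$, and set $N = N_1 \cup N_2$, which is a null set by Theorem \ref{thm2.7.1}. For every $x \notin N$, both sequences $f_n(x) \to f(x)$ and $f_n(x) \to g(x)$ hold simultaneously in $\RR$, so uniqueness of limits forces $f(x) = g(x)$. Therefore $\{x : f(x) \neq g(x)\} \subseteq N$, and being a subset of a null set it is itself a null set (Theorem \ref{thm2.7.1} again). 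This gives $f = g$ a.e.

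For the reverse direction, assume $f_n \rightarrow f$ a.e.\ and $f = g$ a.e. Let $N_1 = \{x : f_n(x) \not\to f(x)\}$ and $N_2 = \{x : f(x) \neq g(x)\}$, both null sets by hypothesis, so $N_1 \cup N_2$ is a null set. For any $x$ outside this union, $f_n(x) \to f(x) = g(x)$, which shows $f_n \rightarrow g$ a.e.

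There is no real obstacle here: the argument is a direct pointwise analogue, and the only bookkeeping is to combine the two relevant null sets. Unlike the norm version (Theorem \ref{thm2.7.3}) where the triangle inequality was invoked, here the key ingredient is simply uniqueness of limits of real number sequences, applied off the exceptional null set.
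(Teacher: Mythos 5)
Your proposal is correct and follows essentially the same route as the paper: the reverse direction is identical (union of the two exceptional null sets), and your forward direction simply unpacks, via uniqueness of limits off a combined null set, what the paper states in one line by applying the a.e.\ arithmetic property to $f_n - f_n \rightarrow f - g$ a.e., giving $f - g = 0$ a.e. No gap; the null-set bookkeeping via Theorem \ref{thm2.7.1} is exactly what is needed.
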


\begin{proof}  If $f_n \rightarrow f$ a.e. and $f_n \rightarrow g$ a.e., then $f_n-f_n \rightarrow f-g$ a.e., which means
that $f-g = 0$ a.e.

Now, denote by $A$ the set of all $x \in X$ such that the sequence $(f_n(x))$ does not converge to $f(x)$ and by $B$ the set of all $x \in X$ such that $f(x) \neq g(x)$.  Then $A$ and $B$ are null sets and so is $A\cup B$. Since $f_n(x)\rightarrow g(x)$ for every $x$ not in $A\cup B$, we have  $f_n \rightarrow g$ a.e.
\end{proof}

\begin{theorem} \label{thm2.7.5} Let $f_1, f_2, \dots \in \UU$ and $\int|f_1| + \int|f_2| + \dots  < \infty$. Then the series 
$f_1 + f_2 + \dots $ converges almost everywhere.
\end{theorem}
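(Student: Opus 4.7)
The plan is to show the set
$A = \{x \in X : \sum_{n=1}^\infty |f_n(x)| = \infty\}$
is a null set; off $A$ the series $\sum f_n(x)$ is absolutely convergent and therefore converges, yielding the desired a.e.\ convergence. Set $M = \sum_n \int|f_n| < \infty$ and $S_n = |f_1|+\dots+|f_n| \in \UU$, so $(S_n)$ is non-decreasing with $\int S_n \leq M$.

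First, for each $m \in \NN$, I would realize the pointwise limit $\lim_n (S_n \wedge m)$ as an element $G_m$ of $\UU$. To do this, I apply Corollary \ref{cor2.5.1} to the non-negative terms $u_1 = S_1 \wedge m$, $u_n = (S_n \wedge m) - (S_{n-1} \wedge m)$ for $n \geq 2$ (all in $\UU$ by the lattice property): their integrals telescope to $\lim_n \int(S_n \wedge m) \leq M < \infty$, and the pointwise partial sums $S_N(x) \wedge m \leq m$ are absolutely convergent at every $x$, so condition $\mathbb{B}$ of Definition \ref{def1.2.3} forces $G_m(x) = \lim_N (S_N(x) \wedge m)$ for every $x$. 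In particular $G_m = m$ on $A$, $G_m = S \wedge m$ on $A^c$ where $S = \sum_n |f_n|$, and $\int G_m \leq M$.

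Next, set $\psi_m = G_m/m \in \UU$. Then $(\psi_m)$ is non-increasing in $m$, takes values in $[0,1]$, has $\int \psi_m \leq M/m$, and converges pointwise everywhere to $\chi_A$ (equal to $1$ on $A$, and to $(S\wedge m)/m \to 0$ on $A^c$). I then apply Corollary \ref{cor2.5.1} a second time, now to the non-negative differences $d_m = \psi_m - \psi_{m+1} \in \UU$: their integrals sum to $\int \psi_1 - \lim_m \int \psi_m = \int \psi_1 \leq M$, and the pointwise partial sums $\psi_1(x) - \psi_{N+1}(x)$ are bounded by $\psi_1(x) \leq 1$, hence absolutely convergent everywhere. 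The resulting $D \in \UU$ therefore satisfies $D(x) = \psi_1(x) - \chi_A(x)$ for all $x$, so $\chi_A = \psi_1 - D \in \UU$ and $\int \chi_A = \int \psi_1 - \int D = \lim_m \int \psi_m = 0$. Thus $A$ is a null set.

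The main obstacle is that no monotone convergence theorem is yet available, so the two monotone pointwise limits $\lim_n (S_n \wedge m)$ and $\lim_m \psi_m$ must be identified with genuine elements of $\UU$ by hand. Telescoping combined with Corollary \ref{cor2.5.1} is exactly the tool for this; the uniform pointwise boundedness of the partial sums is what upgrades the ``$\simeq$'' produced by Corollary \ref{cor2.5.1} to actual pointwise equality at every $x$, which is needed both to pin down $G_m$ on $A$ and to recover $\chi_A$ itself in $\UU$.
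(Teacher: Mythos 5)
Your overall plan (show that the divergence set $A$ is null, then conclude a.e.\ convergence off $A$) is sound, and the two telescoping applications of Corollary \ref{cor2.5.1}, together with the observation that everywhere-absolute convergence upgrades $\simeq$ to pointwise equality, are handled correctly. The genuine gap is at the very first step: you place $S_n\wedge m$ in $\UU$ ``by the lattice property.'' Corollary \ref{cor1.2.11} only gives closure under $\vee$ and $\wedge$ of \emph{two members of} $\UU$, and the constant function $m$ is in general not a member of $\UU$. The assertion that $f\wedge m\in\UU$ for every $f\in\UU$ and every constant $m>0$ is precisely Stone's condition (Definition \ref{def1.6.2}), which the paper introduces only later, as an \emph{additional} hypothesis, exactly because it does not follow from the Daniell-space axioms; it is not assumed in Theorem \ref{thm2.7.5}. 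It can genuinely fail in a complete Daniell space: take $X=(0,2]$, $\UU=\{c\,\iota : c\in\RR\}$ with $\iota(x)=x$ and $\int c\,\iota = c$; this is a complete Daniell space, yet $\iota\wedge 1\notin\UU$. Hence your construction of $G_m$ (and with it $\psi_m$ and the recovery of $\chi_A$) is unavailable in the stated generality, and as written your argument proves the theorem only for Daniell spaces satisfying Stone's condition.

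The paper avoids truncation altogether. With $g=\chi_A$, one checks directly that $g\simeq f_1-f_1+f_2-f_2+\cdots$: condition $\mathbb{A}$ holds because $2\sum\int|f_n|<\infty$, and condition $\mathbb{B}$ holds because at any $x$ with $\sum|f_n(x)|<\infty$ we have $g(x)=0$, which is the value of the telescoping series, while at points of $A$ condition $\mathbb{B}$ imposes nothing. Completeness then gives $\chi_A\in\UU$ and $\int\chi_A=\int f_1-\int f_1+\int f_2-\int f_2+\cdots=0$, so $A$ is a null set in one stroke. If you wish to salvage your route without Stone's condition, the truncations $S_n\wedge m$ would have to be replaced by objects built only from members of $\UU$; the representation above already accomplishes exactly that.
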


\begin{proof}  By Corollary \ref{cor2.5.1}, there exists a function 
$f \in\mathcal{U}$ such that $f \simeq f_1 + f_2 + \dots $. Since $f(x) = \sum^\infty_{n=1}f_n(x)$ for every $x $ such that $\sum^\infty_{n=1}|f_n(x)| < \infty$, it suffices to show that the set of all points $x \in X$ for which the series
$\sum^\infty_{n=1}|f_n(x)|$ is not absolutely convergent is a null set.  Let $g$ be the characteristic function of that set. Then $g \simeq f_1 - f_1 + f_2 - f_2 + \dots  $, and consequently
$$
\int|g| = \int g =\int f_1 - \int f_1 + \int f_2 - \int f_2 + \dots  = 0. 
$$
\end{proof}

\begin{corollary} \label{cor2.7.1} If $f \simeq f_1 + f_2 + \dots $ , then 
$f = f_1 + f_2 + \dots $ a.e. 
\end{corollary}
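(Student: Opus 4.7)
The plan is to unpack the definition of $f \simeq f_1 + f_2 + \dots$ (Definition \ref{def1.2.3}) and reduce the statement to Theorem \ref{thm2.7.5}. Specifically, condition $\mathbb{A}$ gives $\sum_{n=1}^\infty \int|f_n| < \infty$, so Theorem \ref{thm2.7.5} (or rather the key step inside its proof) applies directly: the set
$$
N = \bigl\{x \in X : \textstyle\sum_{n=1}^\infty |f_n(x)| = \infty\bigr\}
$$
is a null set, since its characteristic function $g$ satisfies $g \simeq f_1 - f_1 + f_2 - f_2 + \dots$ and therefore $\int g = 0$.

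Next I would invoke condition $\mathbb{B}$ of Definition \ref{def1.2.3}: for every $x \notin N$ we have $\sum_{n=1}^\infty |f_n(x)| < \infty$, and hence $f(x) = \sum_{n=1}^\infty f_n(x)$ by $\mathbb{B}$. Thus the equality $f(x) = \sum_{n=1}^\infty f_n(x)$ fails only on the subset of $N$ where the series might happen to converge to something other than $f(x)$ (or diverge), but this set is contained in $N$, which is null. By Theorem \ref{thm2.7.1}, a subset of a null set is null, so $f = \sum_{n=1}^\infty f_n$ a.e.

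There is essentially no obstacle here: the corollary is just a rephrasing of the content of $\mathbb{A}$ and $\mathbb{B}$ once we know that the set where absolute convergence fails is negligible. The only point worth flagging is that Definition \ref{def1.2.3} does not impose any constraint on $f(x)$ at points $x$ where $\sum |f_n(x)| = \infty$, so we cannot hope for pointwise equality in general; the content of the corollary is precisely that this pointwise ambiguity lives on a null set.
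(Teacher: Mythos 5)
Your proof is correct and follows exactly the route the paper intends: Corollary \ref{cor2.7.1} is an immediate consequence of Theorem \ref{thm2.7.5}, whose proof shows the set where $\sum_{n=1}^\infty |f_n(x)|$ diverges is null, combined with condition $\mathbb{B}$ of Definition \ref{def1.2.3} on its complement. Nothing is missing.
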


\begin{theorem} \label{thm2.7.6}  Let $f_1, f_2, \dots  \in\mathcal{U}$  and $\int|f_1| + \int|f_2| + \dots  < \infty$.  Then  
$f = f_1 + f_2 + \dots $ a.e. if and only if 
$f = f_1 + f_2 + \dots $ i.n.
\end{theorem}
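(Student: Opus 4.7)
The plan is to introduce an intermediate function $g \in \UU$ that serves simultaneously as an a.e.\ limit and a norm limit of the partial sums $s_n = f_1 + \cdots + f_n$, and then use the uniqueness-of-limit results (Theorems \ref{thm2.7.3} and \ref{thm2.7.4}) to shuttle between the two modes of convergence.

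First I would invoke Corollary \ref{cor2.5.1}, whose hypothesis $\sum \int |f_n| < \infty$ is exactly what we have, to produce a function $g \in \UU$ with $g \simeq f_1 + f_2 + \cdots$. This single $g$ is the backbone of the argument: by Corollary \ref{cor2.7.1} the partial sums $s_n$ converge to $g$ almost everywhere, and by Theorem \ref{thm2.6.3} they also converge to $g$ in norm.

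For the forward implication, suppose $f = f_1 + f_2 + \cdots$ a.e., i.e.\ $s_n \to f$ a.e. Combined with $s_n \to g$ a.e., Theorem \ref{thm2.7.4} yields $f = g$ a.e. Then since $s_n \to g$ i.n., Theorem \ref{thm2.7.3} upgrades this to $s_n \to f$ i.n., i.e.\ $f = f_1 + f_2 + \cdots$ i.n. For the reverse implication, suppose $s_n \to f$ i.n. Combined with $s_n \to g$ i.n., Theorem \ref{thm2.7.3} gives $f = g$ a.e.; together with $s_n \to g$ a.e., Theorem \ref{thm2.7.4} then yields $s_n \to f$ a.e.

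There is no serious obstacle here: the only real content is recognizing that Corollary \ref{cor2.5.1} supplies a bridging function $g$ which is already known (from earlier results) to be both an a.e.\ limit and an i.n.\ limit of the partial sums, after which the two ``uniqueness up to a.e.\ equality'' theorems do all the bookkeeping symmetrically in each direction.
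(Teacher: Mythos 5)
Your proposal is correct and follows essentially the same route as the paper's proof: produce $g \in \UU$ with $g \simeq f_1 + f_2 + \cdots$ via Corollary \ref{cor2.5.1}, note $g$ is both an a.e.\ and i.n.\ limit of the partial sums (Corollary \ref{cor2.7.1} and Theorem \ref{thm2.6.3}), and then transfer between the two convergences using Theorems \ref{thm2.7.3} and \ref{thm2.7.4}. No gaps.
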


\begin{proof}  By Corollary \ref{cor2.5.1}, there exists a function 
$g \in\mathcal{U}$ such that $g \simeq f_1 + f_2 + \dots $ Then, by Theorem \ref{thm2.6.3} we have $g = f_1 + f_2 + \dots $ i.n. and, Corollary \ref{cor2.7.1},  we have $g = f_1 + f_2 + \dots $  a.e. 

Now, if $f = f_1 + f_2 + \dots $ a.e., then $f = g$ a.e., by Theorem \ref{thm2.7.4}. Hence $f = f_1 + f_2 + \dots $  i.n., by
Theorem \ref{thm2.7.3}.

Conversely, if $f = f_1 + f_2 + \dots $  i.n., then  $f = g$ a.e., by Theorem \ref{thm2.7.3}. Hence $f = f_1 + f_2 + \dots $  a.e., by
Theorem \ref{thm2.7.4}.
\end{proof}

\section{Fundamental Convergence Theorems}

Now we are ready to justify our definition of completeness of Daniell spaces.

\begin{theorem} \label{thm2.8.1} The space $({\mathfrak U}, \|\cdot\|)$ is a complete normed space.
\end{theorem}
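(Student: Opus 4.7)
The plan is the classical Riesz--Fischer argument: given a Cauchy sequence, extract a rapidly convergent subsequence, realise its limit as the sum of an absolutely summable series via Corollary \ref{cor2.5.1}, invoke Theorem \ref{thm2.6.3} to upgrade this to norm convergence of the subsequence, and then promote subsequence convergence to convergence of the full sequence by the Cauchy property.

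First I will work with representatives in $\mathcal{U}$, since $\|[f]\| = \int|f|$ is by construction the same on every representative, so it suffices to produce, for each Cauchy sequence $(f_n)$ in $\mathcal{U}$, some $f \in \mathcal{U}$ with $\|f_n - f\| \to 0$. Using the Cauchy condition I will choose indices $n_1 < n_2 < \cdots$ such that $\|f_{n_{k+1}} - f_{n_k}\| < 2^{-k}$ for all $k \geq 1$. Setting $g_1 = f_{n_1}$ and $g_k = f_{n_k} - f_{n_{k-1}}$ for $k \geq 2$, the partial sums telescope to $g_1 + \cdots + g_k = f_{n_k}$, and
\[
\sum_{k=1}^\infty \int |g_k| \;\leq\; \int |f_{n_1}| + \sum_{k=1}^\infty 2^{-k} \;<\; \infty.
\]

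Because $(X,\mathcal{U},\int)$ is assumed complete (so $\mathcal{U}^{*} = \mathcal{U}$), Corollary \ref{cor2.5.1} will supply an $f \in \mathcal{U}$ with $f \simeq g_1 + g_2 + \cdots$. Theorem \ref{thm2.6.3} then yields $f_{n_k} = g_1 + \cdots + g_k \to f$ in norm. The passage to the full sequence is the standard $\varepsilon/2$ argument: given $\varepsilon > 0$, pick $N$ with $\|f_m - f_n\| < \varepsilon/2$ for $m,n \geq N$, then $k$ with $n_k \geq N$ and $\|f_{n_k} - f\| < \varepsilon/2$, and use
\[
\|f_n - f\| \;\leq\; \|f_n - f_{n_k}\| + \|f_{n_k} - f\| \;<\; \varepsilon
\]
for all $n \geq N$. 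Passing back to equivalence classes is automatic since $\|[f_n] - [f]\| = \|f_n - f\|$.

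There is essentially no substantive obstacle, because all the real content sits in earlier results: Corollary \ref{cor2.5.1} is exactly the absolute-summability-implies-pointwise-a.e.-sum statement (and is the place where completeness of $\mathcal{U}$ enters), while Theorem \ref{thm2.6.3} turns such a representation into norm convergence of partial sums. The mild technical point worth stating explicitly will be the standard observation that a Cauchy sequence with a norm-convergent subsequence is itself norm-convergent to the same limit; this is routine but should be written down so that the use of the Cauchy hypothesis is visible.
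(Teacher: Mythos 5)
Your proof is correct and rests on exactly the same two ingredients as the paper's argument: Corollary \ref{cor2.5.1} to produce a limit function from an absolutely summable series, and Theorem \ref{thm2.6.3} to upgrade the representation $f \simeq g_1 + g_2 + \cdots$ to norm convergence of the partial sums. The only difference is one of packaging: the paper stops after showing that every absolutely convergent series in $\UU$ converges in norm (invoking the standard criterion that this characterizes completeness of a normed space), while you additionally write out the routine reduction from a general Cauchy sequence to a rapidly convergent subsequence and back, which is the proof of that criterion.
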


\begin{proof}   We will prove that every absolutely convergent series in $\UU$ converges in norm. Let $f_n \in \UU$, $n = 1, 2,\dots $, and  let $\sum^\infty_{n=1}\int|f_n| < \infty$. Then, by Corollary \ref{cor2.5.1}, there exists an $f \in \UU$ such that $f \simeq \sum^\infty_{n=1}f_n$. This in turn implies, by Theorem \ref{thm2.6.3}, that the series $\sum^\infty_{n=1}f_n$ converges to $f$ in norm, proving the theorem.
\end{proof}

\begin{theorem} \label{thm2.8.2} If $f_n \rightarrow f$ i.n. in $\UU$, then there exists a subsequence $(f_{p_n})$  of $(f_n)$ such that $f_{p_n} \rightarrow f$ a.e.
\end{theorem}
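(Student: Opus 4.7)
The plan is to extract a rapidly convergent subsequence and then use the absolute-summability criterion from Theorem \ref{thm2.7.5} to obtain pointwise convergence off a null set.

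First I would use the hypothesis $\|f_n-f\| \to 0$ to pick indices $p_1 < p_2 < \dots$ such that $\|f_{p_n}-f\| < 2^{-n}$ for every $n$. Set $g_n = f_{p_n}-f$; since $\UU$ is a vector space (Corollary \ref{cor1.2.9}) each $g_n$ lies in $\UU$, and by construction
$$
\sum_{n=1}^\infty \int|g_n| = \sum_{n=1}^\infty \|g_n\| < \sum_{n=1}^\infty 2^{-n} = 1 < \infty.
$$

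Next I would apply Theorem \ref{thm2.7.5} to the sequence $(g_n)$. The proof of that theorem actually establishes the stronger fact that the set $N = \{x \in X : \sum_{n=1}^\infty |g_n(x)| = \infty\}$ is a null set: its characteristic function is shown to be equivalent to the null series $g_1 - g_1 + g_2 - g_2 + \cdots$. For every $x \notin N$ the series $\sum_{n=1}^\infty |g_n(x)|$ converges, which forces the terms $g_n(x)$ to tend to $0$. Hence $g_n \to 0$ a.e., i.e., $f_{p_n} \to f$ a.e.

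There is no real obstacle here; the only point to watch is distinguishing between \emph{convergence} of $\sum g_n(x)$ and \emph{absolute convergence}, since only the latter immediately gives $g_n(x) \to 0$. The proof of Theorem \ref{thm2.7.5} already delivers absolute convergence outside a null set, so this step is free, and the theorem follows.
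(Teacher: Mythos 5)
Your proof is correct, but it takes a genuinely shorter route than the paper. Both arguments start identically, by extracting $p_1<p_2<\dots$ with $\|f_{p_n}-f\|<2^{-n}$. The paper then telescopes along the subsequence: it bounds $\int|f_{p_{n+1}}-f_{p_n}|$, forms the series $f_{p_1}+(f_{p_2}-f_{p_1})+(f_{p_3}-f_{p_2})+\cdots$, obtains an auxiliary limit $g\in\UU$ via Corollary \ref{cor2.5.1}, gets $f_{p_n}\to g$ a.e.\ from Corollary \ref{cor2.7.1}, and only then identifies $g$ with $f$ through Theorems \ref{thm2.7.3} and \ref{thm2.7.4}. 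You instead exploit the fact that $f\in\UU$ (which Definition \ref{def2.6.3} builds into the hypothesis), so the differences $g_n=f_{p_n}-f$ themselves lie in $\UU$, are norm-summable, and Theorem \ref{thm2.7.5} forces $g_n(x)\to 0$ off a null set; no auxiliary limit and no identification step are needed. This is a legitimate simplification in the present setting; the paper's telescoping pattern is the one that recurs elsewhere (e.g.\ in Theorem \ref{thm2.8.3}) precisely because it also produces a limit function when none is given in advance. One small remark: your caution about absolute versus plain convergence is unnecessary — if $\sum_n g_n(x)$ converges at all, its terms tend to $0$ — so the statement of Theorem \ref{thm2.7.5} alone suffices, and you do not need to quote the stronger fact from its proof, though doing so is also fine.
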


\begin{proof}  Since $\int|f_n-f| \rightarrow 0$, there exists an increasing sequence of positive integers $(p_n)$ such that 
$\int|f_{p_n}-f| < 2^{-n}$.  Then
$$
\int|f_{p_{n+1}}-f_{p_n}| \leq \int|f_{p_{n+1}}-f| + \int|f-f_{p_n}| <
\frac{3}{2^{n+1}}
$$
and consequently
$$
\int|f_{p_1}| + \int|f_{p_2}-f_{p_1}| + \int|f_{p_3}-f_{p_2}| + \dots   < \infty.
$$
Thus, there exists a $g \in \UU$ such that 
$$
g \simeq f_{p_1} + (f_{p_2}-f_{p_1}) + (f_{p_3}-f_{p_2}) + \dots ,
$$
and, by Corollary \ref{cor2.7.1},
$$
g = f_{p_1} + (f_{p_2}-f_{p_1}) + (f_{ p_3}-f_{p_2}) + \dots  \text{ a.e.} \; .
$$
This means $f_{p_n} \rightarrow g$ a.e.  Since also
$f_{p_n} \rightarrow g$ i.n.  and $f_{p_n} \rightarrow f$ i.n., we conclude $f = g$ a.e., by Theorem \ref{thm2.7.3}.  Therefore  $f_{p_n} \rightarrow f$ a.e., by Theorem \ref{thm2.7.4}.  
\end{proof}

A sequence of functions is called {\it monotone} if it is non-increasing or non-decreasing. 

\begin{theorem} \label{thm2.8.3} {\rm (Monotone Convergence Theorem)} If $f_n\in\UU$ is a monotone sequence and
$\left|\int f_n\right| \leq M$ for some constant $M$ and all $n \in \mathbb N$, then there exists $f\in\UU$ such that $f_n \rightarrow f$ i.n. and  $f_n \rightarrow f$ a.e. Moreover, $\left|\int f \right| \leq M$.
\end{theorem}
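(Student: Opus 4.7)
The plan is to reduce to an absolutely convergent series and then apply Corollaries \ref{cor2.5.1}, \ref{cor2.7.1} and Theorem \ref{thm2.6.3}. Without loss of generality I take $(f_n)$ non-decreasing; the non-increasing case follows by replacing $f_n$ with $-f_n$. Telescope by setting $g_1 = f_1$ and $g_n = f_n - f_{n-1}$ for $n \geq 2$, so that the partial sum $g_1 + \cdots + g_n$ equals $f_n$ pointwise, and $g_n \geq 0$ for $n \geq 2$ (almost everywhere reasoning is not needed here since the inequality is pointwise by monotonicity).

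The key step is verifying $\sum_{n=1}^{\infty} \int |g_n| < \infty$. For $n \geq 2$, $|g_n| = g_n$, so by linearity
$$
\sum_{n=2}^{N} \int |g_n| = \sum_{n=2}^N \bigl(\int f_n - \int f_{n-1}\bigr) = \int f_N - \int f_1.
$$
Since $(f_n)$ is non-decreasing, $(\int f_n)$ is a non-decreasing real sequence bounded above by $M$ (from $|\int f_n| \leq M$), so it converges. Therefore $\sum_{n=1}^{\infty} \int |g_n| = \int |f_1| + \lim_n \int f_n - \int f_1 < \infty$.

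Now Corollary \ref{cor2.5.1} produces $f \in \UU$ with $f \simeq g_1 + g_2 + \cdots$. Corollary \ref{cor2.7.1} then gives $g_1 + g_2 + \cdots = f$ a.e., which is exactly $f_n \to f$ a.e. Theorem \ref{thm2.6.3} gives the same convergence in norm. Finally, Theorem \ref{thm2.6.2} gives $\int f_n \to \int f$, and since $|\int f_n| \leq M$ for all $n$, the limit satisfies $|\int f| \leq M$.

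No step looks particularly hard; the mild subtlety is recognizing that the hypothesis $|\int f_n| \leq M$ is exactly what makes the telescoping series $\sum (\int f_n - \int f_{n-1})$ absolutely convergent, so that Corollary \ref{cor2.5.1} applies. One must also be careful that the convergence $f_n \to f$ holds at every point where the series $\sum g_n(x)$ is absolutely convergent, so that the a.e.\ statement indeed follows directly from $f \simeq \sum g_n$ combined with Corollary \ref{cor2.7.1}, without any extra argument about the exceptional null set.
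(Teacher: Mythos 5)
Your proof is correct and follows essentially the same route as the paper's: telescoping the monotone sequence into the series $f_1+(f_2-f_1)+(f_3-f_2)+\cdots$ with $\sum_n\int|g_n|<\infty$, then invoking Corollary \ref{cor2.5.1}, Theorem \ref{thm2.6.3}, and Corollary \ref{cor2.7.1}. The only (harmless) difference is at the end: the paper first normalizes to non-negative $f_n$ and bounds $\left|\int f\right|$ by the series of integrals, whereas you deduce $\left|\int f\right|\leq M$ from $\int f_n \to \int f$ via Theorem \ref{thm2.6.2}, which is equally valid.
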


\begin{proof}  Without loss of generality, we can assume that the sequence is non-decreasing and the functions are non-negative.  In such a case
$$
\int|f_1| + \int|f_2-f_1| + \dots  + \int|f_n-f_{n-1}| =
\int|f_n| \leq M,
$$
for every $n \in \mathbb N$.  By letting $n \rightarrow \infty$, we obtain
$$
\int|f_1| + \int|f_2-f_1| + \dots  \leq M.
$$
By Corollary \ref{cor2.5.1}, there exists an $f \in \UU$
such that $f \simeq f_1 + (f_2-f_1) + \dots $. Hence,  
$f_n \rightarrow f$ i.n., by
Theorem \ref{thm2.6.3}, and  $f_n \rightarrow f$ a.e., by Corollary \ref{cor2.7.1}. Finally
\begin{align*}
\left|\int f \right| &= \left|\int f _1 + 
\int (f_2-f _1)+\int (f_3-f_2)- \dots  \right|\\ 
&\leq  \int|f_1| + \int|f_2- f_1| +
\int|f_3- f_2| + \dots  \leq M. 
\end{align*}
\end{proof}

\begin{theorem}  \label{thm2.8.4} {\rm (Dominated Convergence Theorem)} If a sequence of functions $f_n\in\UU$ converges almost everywhere to a function $f$ and there exists a function $h\in\UU$ such that $|f_n| \leq h$ for every $n \in \mathbb N$, then 
$f\in\UU$ and $f_n \rightarrow f$ i.n.
\end{theorem}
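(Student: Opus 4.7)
The plan is to sandwich $f$ between two monotone sequences in $\UU$, built from finite lattice operations on the $f_n$, and then apply the Monotone Convergence Theorem (Theorem \ref{thm2.8.3}) twice.

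First I would construct, for each fixed $n$, a function $g_n \in \UU$ that plays the role of $\inf_{k \geq n} f_k$. Set
\[
g_{n,m} = f_n \wedge f_{n+1} \wedge \cdots \wedge f_{n+m},
\]
which lies in $\UU$ by Corollary \ref{cor1.2.11}. For fixed $n$, $(g_{n,m})_m$ is non-increasing in $m$, and since $-h \leq g_{n,m} \leq h$ we get $|\int g_{n,m}| \leq \int h$. Monotone Convergence then yields $g_n \in \UU$ with $g_{n,m} \to g_n$ both in norm and almost everywhere, so $g_n(x) = \inf_{k \geq n} f_k(x)$ a.e. Working symmetrically with suprema gives $G_n \in \UU$ with $G_n(x) = \sup_{k \geq n} f_k(x)$ a.e.

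Next I would apply MCT a second time to the sequence $(g_n)$, which is non-decreasing in $n$, lies in $\UU$, and satisfies $|\int g_n| \leq \int h$. This produces $g^* \in \UU$ with $g_n \to g^*$ i.n. and a.e. Because $f_n \to f$ a.e. by hypothesis, $g_n(x) = \inf_{k \geq n} f_k(x) \to \liminf_n f_n(x) = f(x)$ a.e., so $g^* = f$ a.e.; under the paper's convention of identifying a.e.-equal functions (discussed after Definition of equivalence classes), this gives $f \in \UU$. The same reasoning applied to $(G_n)$ gives $G_n \to f$ i.n. and a.e.

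Finally, since $g_n \leq f_n \leq G_n$ everywhere and $g_n \leq f \leq G_n$ a.e.\ (as both monotone sequences have $f$ as their a.e.\ limit), we obtain $|f_n - f| \leq G_n - g_n$ a.e. Hence, using Theorem \ref{thm2.6.2},
\[
\|f_n - f\| = \int |f_n - f| \leq \int G_n - \int g_n \to \int f - \int f = 0,
\]
which gives $f_n \to f$ i.n. The main subtlety is justifying the two applications of MCT, in particular checking that the finite lattice truncations $g_{n,m}$, $G_{n,m}$ have uniformly bounded integrals via the dominating function $h$, and then cleanly handing off the a.e.\ identifications so that one may legitimately claim $f \in \UU$ rather than merely producing some $\UU$-function a.e.\ equal to $f$.
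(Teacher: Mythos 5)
Your argument is correct, but it follows a genuinely different route from the paper's. You run the classical $\liminf$/$\limsup$ sandwich: build both envelopes $\inf_{k\geq n} f_k$ and $\sup_{k\geq n} f_k$ by two layers of the Monotone Convergence Theorem and squeeze $|f_n-f|\leq G_n-g_n$, so that $\int|f_n-f|\to \int f-\int f=0$; this is exactly the strategy the paper reserves for Fatou's Lemma (Theorem \ref{thm2.8.5}), and it never uses the completeness of the normed space or the subsequence theorem. The paper instead builds only the one-sided envelopes $g_{m,n}=\max\{|f_m|,\dots,|f_{m+n}|\}$, settles the special case $f=0$ directly from $\int|f_n|\leq\int g_n\to 0$, and then handles general $f$ by showing that consecutive differences of any subsequence tend to $0$ in norm (Case 1 with dominating function $2h$), so $(f_n)$ is Cauchy; completeness of $({\mathfrak U},\|\cdot\|)$ (Theorem \ref{thm2.8.1}) produces a norm limit $\tilde f$, Theorem \ref{thm2.8.2} extracts an a.e.-convergent subsequence, and Theorems \ref{thm2.7.3} and \ref{thm2.7.4} identify $\tilde f$ with $f$. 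Your route buys a more self-contained, measure-theory-style proof needing only MCT and the elementary a.e.\ machinery; the paper's buys a shorter argument that exploits the Riesz--Fischer-type completeness it has just established and avoids the second MCT and the double-envelope bookkeeping. Two points you flag should indeed be attended to, though both are fixable with the paper's tools (and the paper is equally casual about them in its Fatou proof): the MCT is stated for everywhere-monotone sequences, while your $g_n, G_n$ are only a.e.\ equal to $\inf_{k\geq n}f_k$, $\sup_{k\geq n}f_k$, so you should pass to the pointwise-limit representatives (which exist everywhere and are finite because $|g_{n,m}|\leq h$) before the second MCT; and the step from ``$g^*=f$ a.e.\ with $g^*\in\UU$'' to ``$f\in\UU$'' needs the observation that a function a.e.\ equal to a member of $\UU$ lies in $\UU$, e.g.\ because $f-g^*\simeq \chi_Z+\chi_Z+\cdots$ for the null set $Z$ where they differ.
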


\begin{proof}  For $m,n = 1, 2,\dots $, define
$$
g_{m,n} = \max \{|f_m|,\dots ,|f_{m+n}|\}.
$$
Then, for every fixed $m \in \mathbb N$, the sequence 
$(g_{m,1}, g_{m,2},\dots )$  is non-decreasing and, since
$$
\left |\int g_{m,n} \right| = \int g_{m,n} \leq 
\int h < \infty,
$$
there is $g_m\in\UU$ such that $g_{m,n} \rightarrow g_m$   a.e., as $n \rightarrow \infty$. 

Note that the sequence $(g_n)$ is non-increasing and $0 \leq g_n$ for all $n \in \mathbb N$. Thus, it converges to a function $g$ at every point and, by the Monotone Convergence Theorem, $g\in\UU$ and $g_n \rightarrow g$ i.n.  Now we consider two cases.

Case 1: Suppose $f = 0$. Then $f_n \rightarrow 0$ a.e., and
therefore $g_n \rightarrow 0$ a.e. Since the sequence $(f_n)$ converges in norm, we obtain $g_n \rightarrow 0$ i.n.  Hence
$$
\int|f_n| \leq \int g_n \rightarrow 0, 
$$
which proves the theorem in the first case.

Case 2: When $f$ is an arbitrary function, then for every increasing sequence of positive integers $(p_n)$ we have
$$
h_n = f_{p_{n+1}}-f_{p_n} \rightarrow 0 \text{ a.e.}
$$
and $|h_n| \leq 2h$ for every $n \in \mathbb N$.  By Case 1, we must have $h_n \rightarrow 0$ i.n. This
shows that the sequence $(f_n)$ is a Cauchy sequence in
$\UU$ and therefore it converges in norm to some
$\tilde f \in \UU$, by Theorem \ref{thm2.8.1}.  On the other
hand, by Theorem \ref{thm2.8.2}, there exists an increasing sequence of positive integers $q_n$ such that $f_{q_n} \rightarrow \tilde f$ a.e. But
$f_{q_n} \rightarrow f$ a.e., and thus $\tilde f = f$ a.e.  This, in
view of Theorem \ref{thm2.7.3}, implies that $f_n \rightarrow f$ i.n. 
\end{proof}

\begin{theorem}  \label{thm2.8.5} {\rm (Fatou's Lemma)} Let $f_n\in\UU$ be a sequence of non-negative functions such that $\int f_n \leq M$  for some $M$ and every $n \in \mathbb N$.  If $f_n \to f$ a.e., then $f\in\UU$ and $\int f \leq M$.
\end{theorem}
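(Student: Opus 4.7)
The plan is to reduce Fatou's Lemma to the Monotone Convergence Theorem (Theorem \ref{thm2.8.3}) by building the $\liminf$ of $(f_n)$ inside $\UU$ in two stages. First, for each $n,k \in \NN$, set
$$
g_{n,k} = f_n \wedge f_{n+1} \wedge \dots \wedge f_{n+k},
$$
which lies in $\UU$ because $\UU$ is closed under lattice operations (Corollary \ref{cor1.2.11}). For fixed $n$, the sequence $(g_{n,k})_{k \in \NN}$ is non-increasing and satisfies $0 \leq g_{n,k} \leq f_n$, so $0 \leq \int g_{n,k} \leq \int f_n \leq M$. By Theorem \ref{thm2.8.3}, there exists $g_n \in \UU$ such that $g_{n,k} \to g_n$ both i.n.\ and a.e.\ as $k \to \infty$. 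Since pointwise $g_{n,k}(x) \to \inf_{m \geq n} f_m(x)$ for every $x$, we have $g_n(x) = \inf_{m \geq n} f_m(x)$ outside a null set $N_n$, and $\int g_n \leq \int f_n \leq M$.

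Next I apply the Monotone Convergence Theorem to the sequence $(g_n)$ itself. Outside the null set $N = \bigcup_n N_n$ (which is null by Theorem \ref{thm2.7.1}), the identity $g_n = \inf_{m \geq n} f_m$ makes $(g_n)$ pointwise non-decreasing, so $g_n \leq g_{n+1}$ a.e., which by Theorem \ref{thm2.7.2} gives $\int(g_{n+1} - g_n) \geq 0$ and thus monotonicity of $(g_n)$ up to equivalence. Combined with $0 \leq \int g_n \leq M$, Theorem \ref{thm2.8.3} produces $g \in \UU$ with $g_n \to g$ a.e.\ and $\int g \leq M$.

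Finally I identify $g$ with $f$ almost everywhere. Let $E$ be the null set off of which $f_n \to f$, and enlarge the exceptional set to $N \cup E$, still null. For $x \notin N \cup E$, we have $g_n(x) = \inf_{m \geq n} f_m(x)$ and $f_m(x) \to f(x)$, so $g_n(x) \to \liminf_{m \to \infty} f_m(x) = f(x)$. Hence $g = f$ a.e., and by the customary identification of functions with their equivalence classes (and Theorem \ref{thm2.7.2}), $f \in \UU$ with $\int f = \int g \leq M$.

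The only delicate point is the bookkeeping of null sets: the MCT delivers its conclusions only almost everywhere, so each use introduces an exceptional null set, and I must invoke Theorem \ref{thm2.7.1} to absorb the countable union of these into one null set before I can claim $g = \liminf f_n = f$ pointwise off that set. Beyond this, the argument is a straightforward two-step monotone exhaustion.
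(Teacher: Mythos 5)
Your proposal is correct and follows essentially the same route as the paper: form the finite minima $f_n \wedge \dots \wedge f_{n+k}$, apply the Monotone Convergence Theorem twice (first in $k$, then in $n$) to obtain the a.e.\ infima and their increasing limit $g \in \UU$ with $\int g \leq M$, and identify $g = f$ a.e. Your explicit bookkeeping of the exceptional null sets via Theorem \ref{thm2.7.1} is in fact slightly more careful than the paper's own write-up.
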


\begin{proof}  Let $\varphi_{n,k} = f_n \wedge  f_{n + 1}\wedge \dots \wedge f_{n + k}$, for $n,k \in \mathbb N$. For a fixed $n \in \mathbb N$ the sequence  
$(\varphi_{n,1}, \varphi_{n,2},\dots )$ is a decreasing sequence in $\UU$ such that $\left|\int\varphi_{n,k}\right| \leq \int\varphi_{n,1} < \infty$. Thus, by the Monotone Convergence Theorem, it converges almost everywhere to a function $\varphi_n\in\UU$.  Thus we have
$$
\varphi_n = \inf \{f_n, f_{n + 1},f_{n + 2},\dots \} \text{ a.e.}
$$
Since $\int \varphi_n \leq \int f_n \leq M$ and 
$\varphi_1 \leq \varphi_2 \leq \varphi_3 \leq \dots $,
the sequence $(\varphi_n)$ converges almost everywhere to a
function $g\in\UU$ and we have $\int g \leq M$, again by the Monotone Convergence Theorem. If $f_n(x)\rightarrow f(x)$ for some $x\in X$, then $\varphi_n(x)\rightarrow f(x)$.  Thus $f=g$ a.e. and $\int f \leq M$. 
\end{proof}

\section{Daniell Spaces and Measures}\label{DS&M}

Let $(X, \Sigma, \mu)$ be a measure space. A function $f:X \to \RR$ is called a simple function if it has the form
$$
f=\alpha_1 \chi_{A_1} + \dots + \alpha_n \chi_{A_n}
$$
where $n\in \NN$, $A_1,\dots , A_n \in \Sigma$, and $\alpha_1, \dots , \alpha_n \in \RR$.  The space of all simple function on the measure space $(X, \Sigma, \mu)$ will be denoted by $\SF(X, \Sigma, \mu)$.  On $\SF(X, \Sigma, \mu)$ we define a functional $\int$:
$$
\int_X f\, d\mu =\int_X (\alpha_1 \chi_{A_1} + \dots + \alpha_n \chi_{A_n}) d\mu = \alpha_1 \mu(A_1) + \dots + \alpha_n \mu(A_n).
$$

A standard argument shows that the integral of a simple function is well defined.

\begin{theorem}
 $\left(X,\SF(X, \Sigma, \mu), \int \right)$ is a Daniell space.
\end{theorem}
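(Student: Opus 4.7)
The plan is to verify, for the triple $(X, \SF(X,\Sigma,\mu), \int)$, the three requirements of Definition \ref{def1.2.2}: that $\SF(X,\Sigma,\mu)$ is a Riesz space, that $\int$ is a positive linear functional on it, and that the Daniell continuity axiom II holds. Throughout I assume (as the finiteness of the defining sum demands) that in the representation $f=\sum \alpha_i\chi_{A_i}$ the sets $A_i$ with $\alpha_i\ne 0$ have finite measure.

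The workhorse for the algebraic and lattice structure, and for the well-definedness of $\int$, is the standard observation that any two simple functions $f,g$ admit a common disjoint refinement: the atoms $E_1,\dots,E_m$ of the finite $\sigma$-algebra generated by the sets appearing in $f$ and $g$ are disjoint, of finite measure, and yield representations $f=\sum a_j\chi_{E_j}$, $g=\sum b_j\chi_{E_j}$. Linear combinations, $f\vee g=\sum (a_j\vee b_j)\chi_{E_j}$, and $f\wedge g=\sum(a_j\wedge b_j)\chi_{E_j}$ are then visibly simple, giving the Riesz space property; the same refinement trick shows $\int$ is well-defined and linear. For condition I, refining a non-negative $f$ to disjoint form forces each coefficient to be $\ge 0$, so $\int f\ge 0$.

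The heart of the proof is condition II, and I would mimic the structure of Theorem \ref{thm2.2.2} with one key substitution. Let $(f_n)\subset \SF$ be a non-increasing sequence of non-negative simple functions with $f_n(x)\to 0$ for every $x$. Fix $\varepsilon>0$, let $S=\{x:f_1(x)>0\}$, so $c:=\mu(S)<\infty$, and set $M=\max f_1$; we may assume $c,M>0$. With $\alpha=\frac{\varepsilon}{2c}$, define the measurable sets $A_n=\{x\in S: f_n(x)<\alpha\}$. Since $(f_n)$ is non-increasing, $(A_n)$ is increasing, and pointwise convergence forces $\bigcup_n A_n = S$; continuity of $\mu$ from below then yields $\mu(S\setminus A_n)\to 0$, so we may choose $n_0$ with $\mu(S\setminus A_{n_0})<\frac{\varepsilon}{2M}$. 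Splitting $f_{n_0}=f_{n_0}\chi_{A_{n_0}}+f_{n_0}\chi_{S\setminus A_{n_0}}$, the first summand is at most $\alpha$ on $S$ and so has integral at most $\alpha c=\varepsilon/2$, while the second is at most $M$ on a set of measure $<\varepsilon/(2M)$ and so contributes less than $\varepsilon/2$. Hence $\int f_{n_0}<\varepsilon$, and monotonicity of $(\int f_n)$ gives $\int f_n\to 0$. The main obstacle, and the one substantive departure from the Lebesgue argument, is that compactness of $\overline{I}$ together with the finite-subcover step used in Theorem \ref{thm2.2.2} is simply unavailable here; countable additivity of $\mu$, applied as continuity from below to the increasing family $(A_n)$, is the precise replacement.
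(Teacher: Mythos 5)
Your proposal is correct and follows essentially the same route as the paper: condition II is established by splitting $f_{n_0}$ into a piece that is uniformly below the threshold $\alpha$ on the (finite-measure) support of $f_1$ and a piece supported on a set of small measure, with countable additivity of $\mu$ replacing the compactness step of Theorem \ref{thm2.2.2}. The only cosmetic difference is that you track the increasing sets $\{f_n<\alpha\}$ via continuity from below, while the paper tracks the decreasing sets $\{f_n>\alpha\}$ and gets their measures to vanish by disjointifying and using $\sigma$-additivity; you also spell out the Riesz-space, well-definedness, and positivity checks that the paper dismisses as standard.
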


\begin{proof} We will only prove that $\lim_{n\to \infty} \int_X f_n\, d\mu=0$  for any  non-increasing sequence of simple functions $(f_n)$ convergent to $0$ at every point of $X$.
 
 Let $(f_n)$ be a non-increasing sequence of simple functions convergent to $0$ at every point of $X$ and let $\varepsilon >0$. Let
 $$
 A_n = \left\{x\in X : f_n(x)> \frac{\varepsilon}{ 2\mu ( {\rm supp} f_1)}  \right\}.
 $$   
Then $A_n\in\Sigma$ and, since $(f_n)$ is a non-increasing sequence, we have $A_1 \supset A_2 \supset \dots $. Define $B_n=A_n\setminus A_{n+1}$. Since $\lim_{n\to\infty} f_n(x)=0$ for all $x\in X$, we have $A_1=\cup_{n=1}^\infty B_n$. Then $\mu(A_1)=\sum_{n=1}^\infty \mu(B_n)$, by $\sigma$-additivity of $\mu$. Hence
 $\sum_{k=n}^\infty \mu(B_k)\to 0$ as $n\to \infty$.  Let $n_0\in\NN$ be such that 
 $$
 \mu(A_n) = \sum_{k=n}^\infty \mu(B_k) < \frac{\varepsilon}{2\|f_1\|_\infty} 
 $$
 for all $n>n_0$. Then, for $n>n_0$, we have
 \begin{align*}
 \int_X f_n\, d\mu &= \int_{A_n} f_n\, d\mu +\int_{X\setminus A_n} f_n\, d\mu \\
 &\leq \|f_n\|_\infty \mu (A_n) + \frac{\varepsilon}{2\mu ( {\rm supp} f_1)} \mu ( {\rm supp} f_n) < \varepsilon.
 \end{align*}
\end{proof}

Now let $(X,\mathcal{U},\int )$ be a complete Daniell space.  A subset $A\subset X$ is called integrable if $\chi_A\in\UU$. A subset $A\subset X$ is called measurable if $A\cap B$ is integrable for every integrable $B\subset X$. Let $\Sigma_{(X,\mathcal{U},\int )}$ be the collection of all measurable subsets of $X$.  Then we define a set function $\mu_{\int} : \Sigma_{(X,\mathcal{U},\int )} \to [0,\infty]$:
$$
\mu_{\int}(A)= \begin{cases} 
\int \chi_A & \text{ if $A$ is integrable},\\
\infty & \text{ otherwise}.
\end{cases} 
$$

\begin{theorem}
 If $(X,\mathcal{U},\int )$ is a complete Daniell space, then $\Sigma_{(X,\mathcal{U},\int )}$ is a $\sigma$-algebra and $\mu_{\int}$ is a $\sigma$-additive measure.
\end{theorem}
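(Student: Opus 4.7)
The plan is to verify the three $\sigma$-algebra axioms for $\Sigma_{(X,\mathcal{U},\int)}$ and then establish $\sigma$-additivity of $\mu_\int$ by a case analysis on whether the union is integrable. The inclusions $\emptyset, X\in\Sigma_{(X,\mathcal{U},\int)}$ are immediate, and closure under complement follows from the identity $\chi_{A^c\cap B}=\chi_B-\chi_{A\cap B}\in\mathcal{U}$, which needs only the vector space structure of $\mathcal{U}$. Closure under finite union and finite intersection follows from the Riesz-lattice identities $\chi_{(A_1\cup A_2)\cap B}=\chi_{A_1\cap B}\vee\chi_{A_2\cap B}$ and $\chi_{(A_1\cap A_2)\cap B}=\chi_{A_1\cap B}\wedge\chi_{A_2\cap B}$.

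The heart of the $\sigma$-algebra argument is closure under countable unions. Given measurable $A_1,A_2,\dots$ and integrable $B$, the partial unions $C_n=\bigcup_{k=1}^n(A_k\cap B)$ satisfy $\chi_{C_n}\in\mathcal{U}$ by the previous step, and $0\le\chi_{C_n}\le\chi_B$ pointwise. The sequence $(\chi_{C_n})$ is monotone and converges pointwise to $\chi_{(\bigcup_k A_k)\cap B}$, so the Dominated Convergence Theorem (Theorem \ref{thm2.8.4}) with dominator $\chi_B$ places the limit in $\mathcal{U}$; hence $\bigcup_k A_k\in\Sigma_{(X,\mathcal{U},\int)}$.

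For $\sigma$-additivity, let $A_1,A_2,\dots$ be pairwise disjoint measurable sets and distinguish three cases. If some $A_n$ is not integrable, then $\bigcup_k A_k$ cannot be either, because otherwise $A_n=A_n\cap\bigcup_k A_k$ would be integrable by measurability of $A_n$; both sides of the identity are then $\infty$. If all $A_n$ are integrable but $\sum_n\int\chi_{A_n}=\infty$, then $\int\chi_{\bigcup_{k\le N}A_k}=\sum_{n\le N}\int\chi_{A_n}\to\infty$ rules out integrability of the full union, and again both sides are $\infty$. Finally, if $\sum_n\int\chi_{A_n}<\infty$, Corollary \ref{cor2.5.1} supplies $f\in\mathcal{U}$ with $f\simeq\sum_n\chi_{A_n}$; disjointness forces $\sum_n\chi_{A_n}(x)\in\{0,1\}$ at every $x\in X$, so the series converges absolutely everywhere to $\chi_{\bigcup_k A_k}(x)$, and $f=\chi_{\bigcup_k A_k}$ on all of $X$. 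Thus $\bigcup_k A_k$ is integrable with $\mu_\int(\bigcup_k A_k)=\int f=\sum_n\int\chi_{A_n}$.

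The main obstacle is the delicate distinction between pointwise and almost-everywhere convergence in the convergent-sum case: a generic invocation of Corollary \ref{cor2.5.1} only produces an $\mathcal{U}$-representative agreeing with $\sum_n\chi_{A_n}$ off a possible null set, which would leave $\chi_{\bigcup_k A_k}$ potentially outside $\mathcal{U}$ on that null set. Pairwise disjointness is exactly what rescues the argument, upgrading the convergence from almost-everywhere to everywhere so that the series representation pins down $\chi_{\bigcup_k A_k}$ itself rather than merely a version of it. A secondary point to watch is that \emph{measurable} is defined through intersections with all integrable test sets, not via integrability of the set itself, so in each closure step the test set $B$ must be carried as a free parameter.
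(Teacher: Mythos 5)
Your proof is correct, and its skeleton matches the paper's: closure of $\Sigma_{(X,\mathcal{U},\int)}$ under the elementary set operations via vector-space/lattice identities applied to $\chi_{A\cap B}$ with the integrable test set $B$ carried along, then a series argument for countable unions, then $\sigma$-additivity by cases. The one genuinely different mechanism is the countable-union step: the paper disjointifies ($V_1=U_1$, $V_n=U_n\setminus(U_1\cup\dots\cup U_{n-1})$) and observes directly that $\chi_{(\bigcup_n U_n)\cap S}\simeq\sum_n\chi_{V_n\cap S}$, so membership in $\mathcal{U}$ is literally the defining property of completeness (condition $\mathbb{A}$ holding because the disjoint partial sums of integrals are bounded by $\int\chi_S$); you instead take partial unions and invoke the Dominated Convergence Theorem (Theorem \ref{thm2.8.4}) with dominator $\chi_B$. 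Both tools are available at this point of the paper; the paper's route is more economical (it uses only the $\simeq$-relation, and the same display immediately yields $\sigma$-additivity), while yours trades that for a one-line appeal to heavier convergence machinery — and your choice of DCT over the Monotone Convergence Theorem is the right one, since DCT asserts that the actual pointwise limit lies in $\mathcal{U}$, whereas MCT only produces some limit agreeing a.e. Your three-way case split for $\sigma$-additivity refines the paper's two cases (the paper folds a non-integrable $U_n$ into the "$\sum\mu_{\int}(U_n)=\infty$" case), and your remark that disjointness upgrades a.e. convergence to everywhere convergence makes explicit what the paper leaves implicit. One small softening of your final caveat: in a complete Daniell space the a.e./everywhere distinction is less dangerous than you suggest, because any function equal a.e. to a member of $\mathcal{U}$ is itself in $\mathcal{U}$ (if $Z$ is the null set where they differ, the difference is $\simeq\chi_Z+\chi_Z+\cdots$, as in the proof of Theorem \ref{thm2.6.1}); still, using disjointness to pin down $\chi_{\bigcup_k A_k}$ exactly, as you do, is the cleaner argument.
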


\begin{proof}
 Let $S,U,V\in\Sigma_{(X,\mathcal{U},\int )}$ and let $S$ be integrable. Since
 $$
\chi_{ (U \setminus V) \cap S}=\chi_{U\cap S}-(\chi_{V\cap S}\wedge\chi_{U\cap S})
 $$
 and
 $$
\chi_{ (U \cup V) \cap S}=\chi_{U\cap S}+\chi_{U\cap S}-(\chi_{V\cap S}\wedge\chi_{U\cap S}),
 $$
$U\setminus V,U\cup V \in \Sigma_{(X,\mathcal{U},\int )}$.
 If $U_1, U_2, \dots \in \Sigma_{(X,\mathcal{U},\int )}$, then the sets
 $$
 V_1=U_1, V_n=U_n \setminus (U_1 \cup \dots \cup U_{n-1}) \text{ for } n>1
 $$
 are measurable in view of the above.  If $S\in\Sigma_{(X,\mathcal{U},\int )}$ is integrable, then
 $$
\chi_{ \left(\bigcup_{n=1}^\infty U_n  \right)\cap S} =\chi_{ \left(\bigcup_{n=1}^\infty V_n  \right)\cap S} \simeq \sum_{n=1}^\infty \chi_{V_n\cap S}
 $$
 and thus $\bigcup_{n=1}^\infty U_n \in \Sigma_{(X,\mathcal{U},\int )}$.

 If $U_1, U_2, \dots \in \Sigma_{(X,\mathcal{U},\int )}$ are disjoint and $\sum_{n=1}^\infty \mu_{\int}(U_n)<\infty$, then $\bigcup_{n=1}^\infty U_n \simeq \sum_{n=1}^\infty U_n$ and hence 
 $\mu_{\int}\left(\bigcup_{n=1}^\infty U_n\right)  = \sum_{n=1}^\infty \mu_{\int}(U_n)$. If $U_1, U_2, \dots \in \Sigma_{(X,\mathcal{U},\int )}$ are disjoint and $\bigcup_{n=1}^\infty \mu_{\int}(U_n)=\infty$, then it is easy to see that $\mu_{\int}\left(\bigcup_{n=1}^\infty U_n\right) =\infty$. 
\end{proof}

Since a complete Daniell space $(X,\mathcal{U},\int )$ defines a measure space, namely $(X, \Sigma_{(X,\mathcal{U},\int )}, \mu_{\int})$, we can consider the standard space of integrable functions $L^1(X, \Sigma_{(X,\mathcal{U},\int )}, \mu_{\int})$.  
Do we have $\UU=L^1(X, \Sigma_{(X,\mathcal{U},\int )}, \mu_{\int})$?  First we prove that $L^1(X, \Sigma_{(X,\mathcal{U},\int )}, \mu_{\int})\subset \UU $.

For our purpose it will be convenient to use the following definition of $L^1(X,\Sigma, \mu)$.

\begin{definition}[$L^1(X,\Sigma, \mu)$] Let $(X,\Sigma, \mu)$ be a measure space.  By $L^1(X,\Sigma, \mu)$ we mean the space of all functions $f:X\to \RR$ such that there are simple functions $f_1,f_2, \ldots \in \SF(X, \Sigma, \mu)$ satisfying the following conditions
\begin{enumerate}
\item[$\mathbb{A}$] $ \sum\limits_{n=1}^{\infty} \int |f_n|< \infty $,
\item[$\mathbb{B}$] $ f(x) = \sum\limits_{n=1}^{\infty}f_n(x)$ for every $x \in X$ for which $ \sum\limits_{n=1}^{\infty} |f_n(x)| < \infty $.
\end{enumerate}
\end{definition}

\begin{theorem}
If $(X,\mathcal{U},\int )$ is a complete Daniell space, then $L^1(X, \Sigma_{(X,\mathcal{U},\int )}, \mu_{\int})\subset \UU $.
\end{theorem}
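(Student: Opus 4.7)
Proof proposal. Given $f \in L^1(X,\Sigma,\mu)$ (writing $\Sigma=\Sigma_{(X,\mathcal{U},\int)}$ and $\mu=\mu_{\int}$), the definition provides simple functions $f_n\in\SF(X,\Sigma,\mu)$ with $\sum_{n=1}^\infty \int|f_n|\,d\mu<\infty$ and $f(x)=\sum f_n(x)$ wherever the series converges absolutely. The plan is to argue that each $f_n$ already lies in $\mathcal{U}$ and that its Daniell integral agrees with its measure integral; conditions $\mathbb{A}$ and $\mathbb{B}$ are then precisely the conditions for $f\simeq\sum f_n$ in the sense of Definition \ref{def1.2.3}, and the completeness of $(X,\mathcal{U},\int)$ yields $f\in\mathcal{U}$.

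The main step is therefore the following claim: every $g\in\SF(X,\Sigma,\mu)$ with $\int|g|\,d\mu<\infty$ belongs to $\mathcal{U}$ and satisfies $\int g = \int g\,d\mu$. To prove it, I would rewrite $g$ as a finite linear combination $g=\beta_1\chi_{C_1}+\dots+\beta_m\chi_{C_m}$ over the pairwise disjoint atoms $C_j\in\Sigma$ generated by the sets appearing in a representation of $g$, keeping only those $C_j$ with $\beta_j\neq 0$. Then
\[
\int|g|\,d\mu=\sum_{j=1}^m|\beta_j|\mu_{\int}(C_j)<\infty
\]
forces $\mu_{\int}(C_j)<\infty$ for each $j$, which by the definition of $\mu_{\int}$ means $C_j$ is integrable, i.e., $\chi_{C_j}\in\mathcal{U}$. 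Since $\mathcal{U}$ is a vector space, $g\in\mathcal{U}$. Moreover, by linearity of the Daniell integral and the defining relation $\int\chi_{C_j}=\mu_{\int}(C_j)$, the two integrals of $g$ coincide, and in particular $\int|g|=\int|g|\,d\mu$.

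Applying this claim to each $f_n$, we obtain $f_n\in\mathcal{U}$ and $\int|f_n|=\int|f_n|\,d\mu$, so condition $\mathbb{A}$ of Definition \ref{def1.2.3} holds; condition $\mathbb{B}$ is inherited verbatim from the $L^1$ definition because the pointwise hypothesis does not involve the integral. Hence $f\simeq\sum_{n=1}^\infty f_n$ in the Daniell sense, and completeness of $(X,\mathcal{U},\int)$ (Definition \ref{def1.2.4}) delivers $f\in\mathcal{U}$.

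The only delicate point is the bookkeeping in step two: one must pass from a possibly redundant representation $\sum\alpha_i\chi_{A_i}$ with $A_i\in\Sigma$ (some of which may fail to be integrable) to a disjoint representation whose surviving sets are forced to be integrable by finiteness of the $L^1$-norm. Once that is in hand the rest is formal, and in particular there is no need to invoke measurability in its full generality—only the stronger notion of integrability, which is precisely what $\chi_A\in\mathcal{U}$ encodes.
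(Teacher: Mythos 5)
Your proof is correct and follows essentially the same route as the paper: each simple function in the $L^1$ representation is shown to lie in $\mathcal{U}$ because, after disjointification, every constituent set with a nonzero coefficient must have finite $\mu_{\int}$-measure and hence be integrable (i.e.\ $\chi_{C_j}\in\mathcal{U}$), after which completeness of $(X,\mathcal{U},\int)$ applied to $f\simeq\sum f_n$ gives $f\in\mathcal{U}$. Your atom-by-atom bookkeeping merely makes explicit the step the paper states without detail, namely that an $L^1$ representation can be taken through characteristic functions of sets of finite measure.
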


\begin{proof}
If $f\in L^1(X, \Sigma_{(X,\mathcal{U},\int )}, \mu_{\int})$, then $f\simeq f_1+f_2+\cdots$, where each $f_n$ is a linear combination of characteristic functions of sets of finite measure.  Consequently, there are $A_1, A_2, \ldots \in \Sigma_{(X,\mathcal{U},\int )}$ and $\alpha_1, \alpha_2, \ldots \in \RR$ such that
$f\simeq \alpha_1\chi_{A_1}+ \alpha_2\chi_{A_2}+\dots$. Since, by definition, $\chi_A\in \UU$ for every integrable $A\in \Sigma_{(X,\mathcal{U},\int )}$, we conclude that $f\in\UU$, by completeness of $\UU$.
\end{proof}

It turns that  $\UU\subset L^1(X, \Sigma_{(X,\mathcal{U},\int )}, \mu_{\int})$ need not hold in general.  It does hold under an additional natural condition on $\UU$, called Stone's condition.

\begin{definition}[Stone's Condition]\label{def1.6.2}
  If $f \in \mathcal{U}$, then $f\wedge 1 \in \mathcal{U}$.
\end{definition}

Note that, since $\UU$ is a vector space, we can replace $1$ with any other positive constant.

\begin{theorem}
If $(X,\mathcal{U},\int )$ is a complete Daniell space satisfying Stone's condition, then $\UU\subset L^1(X, \Sigma_{(X,\mathcal{U},\int )}, \mu_{\int})$.
\end{theorem}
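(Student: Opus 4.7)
The plan is to show that every $f\in\UU$ admits a representation $f\simeq\sum_{n=1}^{\infty} s_n$ with each $s_n\in \SF(X, \Sigma_{(X,\mathcal{U},\int)}, \mu_{\int})$, which is exactly the condition for $f\in L^1(X, \Sigma_{(X,\mathcal{U},\int)}, \mu_{\int})$. By decomposing $f=f^+-f^-$ and using that $\UU$ is a Riesz space, one reduces immediately to the case $f\geq 0$.

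The crucial use of Stone's condition is a preliminary step: for every $\alpha>0$, the level set $A_\alpha=\{x\in X:f(x)>\alpha\}$ is integrable. Indeed, applying Stone's condition (in its scaled form, valid since $\UU$ is a vector space) to $f/\alpha$ gives $f\wedge\alpha\in\UU$, so $f-f\wedge\alpha\in\UU$, and a second application of Stone's condition yields $u_n:=n(f-f\wedge\alpha)\wedge 1\in\UU$ for every $n$. The sequence $(u_n)$ is non-decreasing, bounded pointwise above by $f/\alpha$ (so $\int u_n\leq(1/\alpha)\int f<\infty$), and converges pointwise everywhere to $\chi_{A_\alpha}$. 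The telescoping representation
$$\chi_{A_\alpha}\simeq u_1+(u_2-u_1)+(u_3-u_2)+\cdots$$
has non-negative terms in $\UU$ with $\sum_{n\geq 1}\int(u_{n+1}-u_n)=\lim_n\int u_n-\int u_1<\infty$, so completeness of $\UU$ places $\chi_{A_\alpha}\in\UU$; in particular $A_\alpha$ is integrable and hence measurable.

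Once the level sets are in hand, the standard dyadic approximation
$$g_n=\sum_{k=1}^{n2^n}\frac{k-1}{2^n}\chi_{E_{n,k}}+n\,\chi_{A_n},\qquad E_{n,k}=A_{(k-1)/2^n}\setminus A_{k/2^n},$$
is a non-decreasing sequence of simple functions in $\SF(X, \Sigma_{(X,\mathcal{U},\int)}, \mu_{\int})$ (each $E_{n,k}$ with $k\geq 2$ and each $A_n$ is integrable, while the $k=1$ term carries coefficient $0$), with $g_n\leq f$ and $g_n(x)\to f(x)$ for every $x\in X$. To confirm $f\simeq g_1+(g_2-g_1)+(g_3-g_2)+\cdots$, condition $\mathbb{B}$ follows at once from pointwise telescoping, and condition $\mathbb{A}$ reduces to $\sum_{n\geq 1}\int(g_{n+1}-g_n)=\lim_n\int g_n-\int g_1$, which is finite because the Monotone Convergence Theorem (Theorem \ref{thm2.8.3}) applied inside $\UU$ yields $\int g_n\to\int f<\infty$. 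Since the functional $\int$ on $\UU$ agrees with integration against $\mu_{\int}$ on simple functions, this representation exhibits $f$ as an element of $L^1(X, \Sigma_{(X,\mathcal{U},\int)}, \mu_{\int})$.

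The main obstacle is the level-set step: producing $\chi_{A_\alpha}\in\UU$ is precisely where Stone's condition is indispensable. Without it there is no mechanism to truncate $f$ from within $\UU$, the approximants $u_n$ cannot be formed, and the dyadic machinery used in the final step is unavailable.
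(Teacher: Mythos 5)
Your proposal is correct and follows essentially the same route as the paper: Stone's condition is used to show $\chi_{\{f>\alpha\}}\in\UU$ via the functions $n(f-f\wedge\alpha)\wedge 1$ together with monotone convergence (your telescoping-plus-completeness argument is just that theorem's proof unwound), and then $f$ is exhibited as $f\simeq g_1+(g_2-g_1)+\cdots$ with dyadic simple approximants. The only cosmetic difference is that you treat unbounded $f$ in one dyadic scheme (with the top term $n\chi_{A_n}$), whereas the paper first handles $0\leq f\leq M$ and then passes to general $f$ via the truncations $f\wedge n$.
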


\begin{proof}

Let $f\in \mathcal{U}$. Without loss of generality, we may assume that f is non-negative (otherwise, we take positive and negative parts).  For $\alpha >0$ let $S_\alpha = \{x\in X : f(x)>\alpha \}$.  Define
$$
g=f-f\wedge \alpha \quad \text{and}\quad  g_n=(ng)\wedge 1. 
$$
Notice that both $g$ and $g_n$ are in $\mathcal{U}$ for all $n\in \NN$ since $\mathcal{U}$ is a vector space that satisfies Stone's condition.  We can see that $g_n \to \chi_{\mathcal{S}_\alpha}$ as $n\to\infty$ at all points of $X$. Since $g_n \leq f/\alpha$ for all $n\in\NN$ and $f/\alpha \in \mathcal{U}$, we conclude $\chi_{S_\alpha}\in \mathcal{U}$, by the Monotone Convergence Theorem.

Now we assume that $0\leq f\leq M$, where $M$ is some positive constant.  For a fixed $n\in \NN$ we let 
\begin{equation}
A_k =\left \{x\in X:f(x)>k \frac{M}{2^n}\right\},\; k=0,1,\ldots,2^n.\nonumber
\end{equation}
From our previous argument, we know that $\chi_{A_k}\in \mathcal{U}$ for $k=0,1,\ldots,2^n$.  Also, notice that $A_0\supset A_1 \supset \cdots A_{2^n-1}\supset A_{2^n}=\emptyset$.  Next, we define $B_k=A_{k-1}\backslash A_k$.  This implies that $\chi_{B_k}\in \mathcal{U}$, since $\chi_{B_k}=\chi_{A_{k-1}}-\chi_{A_k}$, for $k=1,2,\ldots,2^n$, and that 
$$
{\rm supp}(f)=\bigcup_{k=1}^{2^n}B_k.
$$
Now we can define $f_n$ as
\begin{equation}
f_n=\sum_{k=1}^{2^n}(k-1)\frac{M}{2^n}\chi_{B_k}. \nonumber
\end{equation}
Since $f_n$ is a simple function, we have $f_n\in L^1(X,\Sigma,\mu)$.  By construction of $f_n$, we can see that $(f_n)$ is an increasing sequence and $f_n\to f$ at all points of $X$.  We claim that $f\simeq g_1 + g_2 + \cdots$, where $g_1 = f_1$ and $g_n=f_n-f_{n-1}$ for $n=2,3,\ldots$.  Since the sequence $(f_n)$ is increasing, we have 
\begin{equation}
\sum_{n=1}^{\infty}\int|g_n|=\sum_{n=1}^{\infty}\int g_n = \lim_{n\to \infty}\int f_n = \int  f < \infty. \nonumber
\end{equation}
Also,
\begin{equation}
f(x)=\sum_{n=1}^{\infty}g_n(x)=\lim_{n\to \infty}f_n(x) \text{ when } \sum_{n=1}^{\infty}|g_n(x)|=\lim_{n\to \infty}f_n(x)<\infty. \nonumber
\end{equation}
Therefore, $f\simeq f_1 + (f_2-f_1) + \cdots$, which means $f\in L^1(X,\Sigma,\mu)$. 

If $f$ is an unbounded nonnegative function, we consider functions $f_n=f\wedge n$, where $n\in\NN$. By the previous argument, $f_n\in  L^1(X,\Sigma,\mu)$ for all $n\in\NN$.  Then we can show that $f\simeq f_1 +(f_2-f_1)+ \cdots$ as above. Thus $f\in  L^1(X,\Sigma,\mu)$.  
\end{proof}

\end{document}